\documentclass{article}

\usepackage{epsfig}
 \usepackage{epstopdf}
\usepackage[T1]{fontenc}
\usepackage{geometry}
\geometry{top=1.5cm,bottom=2cm}
\usepackage{amsbsy,amsmath,latexsym,amsfonts, epsfig, color, authblk, amssymb, graphics, bm}
\usepackage{epsf,slidesec,epic,eepic}
\usepackage{fancybox}
\usepackage{fancyhdr}
\usepackage{setspace}
\usepackage{nccmath}
\usepackage{color}
\usepackage[colorlinks,linkcolor=black,anchorcolor=black,citecolor=black,hyperindex,CJKbookmarks]{hyperref}
\newenvironment{proof}{{\noindent \bf Proof.}}{\hfill$\Box$\medskip}

\newtheorem{theorem}{Theorem}[section]
\newtheorem{corollary}[theorem]{Corollary}
\newtheorem{lemma}[theorem]{Lemma}

\newtheorem{definition}{Definition}[section]

\newtheorem{remark}{Remark}

\def \b{\beta}

\def \N{\mathbb{N}}

\def \A{\mathcal{A}}
\def \B{\mathcal{B}}
\def \D{\mathcal{D}}
\def \G{\mathcal{G}}
\def \H{\mathcal{H}}

\def \M{\mathcal{M}}

%%theorem styles%%%%%%%%%%%%%%%%%%%%%%%%%%%%%

\begin{document}

\title{ Uniform recurrence properties for beta-transformation
\footnotetext {2010 AMS Subject Classifications: 11K55, 28A80}}
\author{  Lixuan Zheng and Min Wu,\\
\small  \it  LAMA, Universit\'{e} Paris-Est Cr\'{e}teil, 61 av G\'{e}n\'{e}ral de Gaulle, 94010, Cr\'{e}teil, France\\
\it Department of Mathematics, South China University of Technology, Guangzhou 510640, P.R. China\\
\small \it E-mail: lixuan.zheng@u-pec.fr, wumin@scut.edu.cn}
\date{4th June,2018}
\date{}
\maketitle
\begin{center}
\begin{minipage}{120mm}{\small {\bf Abstract.} For any $\beta > 1$, let $T_\beta: [0,1)\rightarrow [0,1)$ be the $\beta$-transformation defined by $T_\beta x=\beta x \mod 1$. We study the uniform recurrence properties of the orbit of a point  under the $\beta$-transformation  to the point itself. The size of the set of points with prescribed uniform recurrence rate is obtained. More precisely, for any $0\leq \hat{r}\leq +\infty$, the set $$\left\{x \in [0,1): \forall\ N\gg1, \exists\ 1\leq n \leq N, {\rm\ s.t.}\ |T^n_\beta x-x|\leq \beta^{-\hat{r}N}\right\}$$ is of  Hausdorff dimension $\left(\frac{1-\hat{r}}{1+\hat{r}}\right)^2$ if $0\leq \hat{r}\leq 1$ and is countable if $\hat{r}>1$.}
\end{minipage}
\end{center}

\vskip0.5cm {\small{\bf Key words and phrases} beta-transformation; Hausdorff dimension; uniform recurrence property}\vskip0.5cm

\section{Introduction}
Let $(X,\B,\mu,T)$ be a measure-preserving dynamical system with a finite Borel measure $\mu$. Let $d$ be a metric on $X$. The well-known Poincar\'{e} Recurrence Theorem shows that typically the orbit of a point asymptotically approaches to the point itself. More precisely, $$\liminf_{n\rightarrow\infty}d(T^n x,x)=0$$ for $\mu$-almost all $x\in X$. Boshernitzan \cite{B} described the speed of such asymptotic recurrence. In fact, he proved that if there is some $\alpha>0$ such that the $\alpha$-dimensional Hausdorff measure $\H^{\alpha}$ is $\sigma$-finite on $X$ (i.e., $X$ can be written as a countable union of subsets $X_i$ with $\H^\alpha(X_i)<\infty$ for all $i=1,2,\ldots$), then$$\liminf_{n\rightarrow\infty}n^{\frac{1}{\alpha}}d(T^n x,x)<\infty$$for $\mu$-almost all $x\in X$, and that if $\H^{\alpha}(X)=0$, then $$\liminf_{n\rightarrow\infty}n^{\frac{1}{\alpha}}d(T^n x,x)=0$$ for $\mu$-almost all $x\in X$.  There are also many other studies on the asymptotic behavior of the orbits motivated by Poincar\'{e} Recurrence Theorem including the first return time \cite{BS}, dynamical Borel-Cantelli Lemma \cite{CK}, waiting time \cite{G}, shrinking target problems \cite{HV1,HV2,SW} and so on.

Different to the asymptotic way of approximation, the famous Dirichlet Theorem provides another point of view of the study on the approximation of the orbits: a uniform way. The Dirichlet Theorem  states that for any positive irrational real number $\theta$, for all real number $N\geq 1$, there is an integer $n$ with $1 \leq n \leq N$ satisfying
\begin{equation}\label{un}
\|n\theta\|<N^{-1},
\end{equation}where $\|\cdot\|$ denotes the distance to the nearest integer. The uniformity lies in that (\ref{un}) has an integer solution for any sufficiently large $N$. Note that $\|n\theta\|=\|T_\theta^n x-x\|$ where $T_\theta:\mathbb R/\mathbb Z\rightarrow \mathbb R/\mathbb Z$ is defined by $T_\theta x=x+\theta$. Thus, the Dirichlet Theorem can be explained as that under the dynamical system $(\mathbb R/\mathbb Z, T_\theta)$, all points $x$ uniformly return to the point itself with the speed $\frac{1}{N}$. Motivated by the Dirichlet Theorem, some results of the uniform approximation properties have already appeared in \cite{BM,BL,BLR,K,KL}.

In our paper, we want to investigate the uniform recurrence property of a point to itself  IN the beta-dynamical system. We aims at giving the sizes (Lebesgue measure and Huasdorff dimension) of the sets of points with prescribed uniform recurrence rate.

For any real number $\beta>1$, the \emph{$\beta$-transformation} $T_\beta$ on $[0,1)$ is defined by
\begin{equation}\label{t}
T_{\beta}(x) = \beta x\mod 1.
\end{equation} We consider the following two exponents of recurrence, one is for asymptotic recurrence, and the other is for uniform recurrence.

\begin{definition}
%Let $\beta>1$. For all $x\in[0,1)$, define $r_{\beta}(x)$ as the supremum of the real number $r$ for which the equation $$|T_{\beta}^nx-x|<(\beta^n)^{-r}$$ has infinitely many solutions $n\in\mathbb{N}$, and define $\hat{r}_{\beta}(x)$ as the supremum of the real number $\hat{r}$ such that for all large enough integer $N$, the equation $$|T_{\beta}^nx-x|<(\beta^N)^{-\hat{r}}$$ has a solution $n\in\mathbb N$ with $1\leq n\leq N$.
Let $\beta>1$. For all $x\in[0,1)$, define$$r_{\beta}(x):=\sup \{0\leq r\leq +\infty: |T_{\beta}^nx-x|<(\beta^n)^{-r}\ {\rm for \ infinitely\ many}\ n\in\mathbb{N}\}$$ and $$\hat{r}_\beta(x):=\sup\{0\leq \hat{r}\leq +\infty:\ {\rm for\ all}\ N\gg1,\ {\rm there\ is\ }n\in[1,N],\ {\rm s.t.\ }|T_{\beta}^nx-x|<(\beta^N)^{-\hat{r}}\}.$$
\end{definition}

The exponents $r_{\beta}(x)$  and $\hat{r}_\beta(x)$ are analogous to the exponents introduced in \cite{MB}, see also \cite{BM,BL}.  By the definitions of $r_{\beta}(x)$  and $\hat{r}_\beta(x)$, it can be checked that $\hat{r}_{\beta}(x)\leq r_{\beta}(x)$ for all $x\in[0,1)$. Actually, applying Philipp's result \cite{PH}, we can deduce that the set $\{x: r_\beta(x)=0\}$ is of full Lebesgue measure (see Section 3.1). The asymptotic exponent $r_\beta(x)$ has been studied by Tan and Wang \cite{TW} who showed that for all $0\leq r \leq +\infty$,
\begin{equation}\label{tw}
\dim_{\rm H}\left\{x \in [0,1):r_\beta(x)\geq r\right\}=\frac{1}{1+r},
\end{equation}where $\dim_{\rm H}$ denotes the Hausdorff dimension. We refer the readers to Falconer \cite{FE} for more properties of Hausdorrf dimension. Our main result is as follows.
\begin{theorem}\label{t1}
Let $\beta>1$. The set $\{x\in[0,1):\hat{r}_\beta(x)=0\}$ is of full Lebesgue measure. When $\hat{r}>1$, the set $\{x\in[0,1):\hat{r}_\beta(x)\geq\hat{r}\}$ is countable.  When $0\leq \hat{r}\leq 1$,  we have $$\dim_{\rm{H}}\{x\in[0,1):\hat{r}_\beta(x)\geq\hat{r}\}=\dim_{\rm{H}}\{x\in[0,1):\hat{r}_\beta(x)=\hat{r}\}=\left(\frac{1-\hat{r}}{1+\hat{r}}\right)^2.$$
\end{theorem}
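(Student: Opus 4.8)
The full-measure statement is immediate: since $\hat r_\beta(x)\le r_\beta(x)$ for every $x$, the set $\{x:\hat r_\beta(x)=0\}$ contains $\{x:r_\beta(x)=0\}$, which has full Lebesgue measure by Philipp's theorem (as recalled in Section 3.1). For everything else I would first turn the metric condition into a combinatorial one. Writing $a_n(x)=-\log_\beta|T_\beta^nx-x|$ and $M(N)=\max_{1\le n\le N}a_n(x)$, the definition of $\hat r_\beta$ unwinds to $\hat r_\beta(x)=\liminf_{N\to\infty}M(N)/N$. The function $M$ is nondecreasing and increases only at the \emph{record times} $n_1<n_2<\cdots$, namely those $n$ with $a_n(x)>a_m(x)$ for all $m<n$; writing $v_k=a_{n_k}(x)$ one checks
$$\hat r_\beta(x)=\liminf_{k\to\infty}\frac{v_k}{n_{k+1}},\qquad r_\beta(x)=\limsup_{k\to\infty}\frac{v_k}{n_k}.$$
Thus the whole problem becomes one about admissible pairs of sequences $(n_k,v_k)$.

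\textbf{The combinatorial constraint and countability.} The next step, and the structural heart of the argument, is to relate $|T_\beta^nx-x|<\beta^{-v}$ to digit matching: up to bounded carry corrections it says that the $\beta$-expansion of $x$ is periodic of period $n$ to depth $v$, i.e. $\epsilon_{i+n}(x)=\epsilon_i(x)$ for $1\le i\le v$. Given two consecutive records one then has period $n_k$ to depth $v_k$ and period $n_{k+1}$ to depth $v_{k+1}$; a Fine--Wilf type argument (two long periods on a common window force their gcd as a period) shows that, since $n_k$ is a record and hence a primitive period, one must have $n_{k+1}\ge v_k$ (up to an additive $\gcd(n_k,n_{k+1})$). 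This single inequality drives everything. For $\hat r>1$: if all $a_n(x)<\infty$, then either there are finitely many records, which forces $\hat r_\beta(x)=0$, or infinitely many, in which case $\hat r_\beta(x)\ge\hat r>1$ forces $v_k>n_{k+1}$ for large $k$, contradicting $n_{k+1}\ge v_k$. Hence $\hat r_\beta(x)\ge\hat r>1$ forces $a_n(x)=+\infty$ for some $n$, i.e. $T_\beta^nx=x$; the periodic points of $T_\beta$ form a countable set, giving the countability statement.

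\textbf{Dimension, lower bound.} For $0\le\hat r\le1$ I would build a Cantor subset of $\{x:\hat r_\beta(x)=\hat r\}$ by prescribing the records. Take $n_k\approx c^k$ with $c=2/(1-\hat r)$ and impose period $n_k$ to depth $v_k=\hat r c\,n_k$, choosing the remaining digits freely among admissible symbols and forcing the match to break exactly at depth $v_k$ (so that the $n_k$ really are the records and $\liminf_k v_k/n_{k+1}=\hat r$ exactly, with no accidental deeper return). One checks $c\ge 1+\hat r c$ for $\hat r\le1$, so each level genuinely adds a fresh free block $[L_{k-1}+1,n_k]$, where $L_k=n_k+v_k$. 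Counting the free digits $\phi_k$ through the recursion $\phi_k=\phi_{k-1}+(n_k-L_{k-1})$ and applying the mass distribution principle to the natural measure gives
$$\dim_{\rm H}\{x:\hat r_\beta(x)=\hat r\}\ \ge\ \liminf_k\frac{\phi_k}{L_k}=\frac{c-1-\hat r c}{(c-1)(1+\hat r c)}=\Big(\tfrac{1-\hat r}{1+\hat r}\Big)^2,$$
the last equality being the outcome of maximizing the middle expression over $c$, attained at $c=2/(1-\hat r)$.

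\textbf{Dimension, upper bound, and the main obstacle.} For the upper bound I would cover $\{x:\hat r_\beta(x)\ge\hat r\}$ by the cylinders dictated by the record structure: along a geometric sequence of scales the constraints $n_{k+1}\ge v_k$ and $v_k\ge\hat r\,n_{k+1}$ bound how the admissible cylinders nest, and optimizing the resulting covering sum $\sum(\mathrm{diam})^s$ over all admissible configurations $(n_k,v_k)$ reproduces exactly the extremal value above; since $\{x:\hat r_\beta(x)=\hat r\}\subseteq\{x:\hat r_\beta(x)\ge\hat r\}$, this bounds both sets and closes the theorem. I expect the principal difficulty to be the one endemic to $\beta$-transformations with non-integer $\beta$: the digit sequences are constrained by admissibility (lexicographic domination by the expansion of $1$), so neither the free-digit count in the construction nor the cylinder count in the cover is literally a power of an integer. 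Controlling these via the entropy estimate (the number of admissible words of length $n$ is $\beta^{n+o(n)}$), together with the bounded-carry corrections relating the metric and symbolic conditions, is the delicate technical work; I would handle it by the approximation scheme used by Tan and Wang for \eqref{tw}, passing to sub-systems on which the counting is clean and then letting the approximation parameter tend to its limit.
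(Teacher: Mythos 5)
Your full-measure reduction and your Cantor construction are fine; the latter is in fact the paper's own construction (your $c=2/(1-\hat r)$ is exactly the paper's optimal choice $r=2\hat r/(1-\hat r)$, i.e.\ $c=r/\hat r$, and the admissibility issues are handled there precisely as you propose, via the approximation $\beta_N$, full words, and the modified mass distribution principle). The genuine gap is the structural claim on which both your countability proof and your upper bound rest: that $|T_\beta^nx-x|<\beta^{-v}$ means, ``up to bounded carry corrections,'' that $\varepsilon(x,\beta)$ has period $n$ to depth $v$. That is false. In the carry case the expansion matches the prefix only to some depth $j$, after which the digits are \emph{forced runs}: a run of $0$'s on one side and a prefix of $\varepsilon^\ast(\beta)$ on the other (this is the paper's Lemma 3.2, equations (\ref{1})--(\ref{2})); these runs can have length comparable to $n$, not $O(1)$, so the periodicity window is only $[1,n+j]$ with $j\ge v-n-O(1)$, i.e.\ of length about $v$, not $n+v$. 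This shortfall defeats your Fine--Wilf step exactly in the range that matters: on the common window of the periods $n_k<n_{k+1}$ you control length roughly $v_k\ge\hat r\,n_{k+1}$, while Fine--Wilf requires length $n_k+n_{k+1}-\gcd(n_k,n_{k+1})$; if, say, $n_{k+1}=n_k+1$ with $\gcd(n_k,n_{k+1})=1$, this needs $\hat r\ge 2-o(1)$, so for $1<\hat r<2$ the hypothesis is simply not met. Worse, even the inequality you assert, $n_{k+1}\ge v_k-\gcd(n_k,n_{k+1})$, does not give your contradiction: combined with $v_k\ge\hat r n_{k+1}$ it only yields $n_k\ge(\hat r-1)n_{k+1}$, i.e.\ at-most-geometric growth of record times, which is perfectly consistent. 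Finally, your target statement --- that $\hat r_\beta(x)>1$ forces $T_\beta^nx=x$ for some $n$ --- is strictly stronger than anything the paper establishes: its countability argument (Lemma \ref{com} together with the explicit countable family $D$ of words $\omega^a$ modified by a carry digit and followed by tails of $0$'s or of $\varepsilon^\ast(\beta)$) deliberately allows non-periodic expansions in $\{x:\hat r_\beta(x)\ge\hat r\}$, and nowhere claims this set reduces to periodic orbits. So this dichotomy would itself require a proof, and your sketch does not supply one.

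Because your upper-bound covering is organized around the same unproved constraint $n_{k+1}\ge v_k$, it inherits the gap. The paper's route needs no such rigidity: for the upper bound it uses only the two limits $\liminf_k(m_k-n_k)/n_{k+1}\ge\hat r$ and $\limsup_k(m_k-n_k)/n_k\le r$ to bound the total length of the ``free'' blocks (Lemma \ref{pre}), covers $R_\beta(\hat r,r)$ by the resulting cylinders, and then in Section 4 slices $\{x:\hat r_\beta(x)\ge\hat r\}$ into pieces $\{r\le r_\beta(x)\le r+\theta\}$ and maximizes $\frac{r-(1+r)\hat r}{(1+r)(r-\hat r)}$ over $r$. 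To repair your draft, replace ``periodic to depth $v$ up to bounded corrections'' by the exact case dichotomy of Lemma 3.2 and rebuild both the countability argument and the covering count from it.
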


Actually, Theorem \ref{t1} follows from the following more general result which gives the Hausdorff dimension of the set of points whose exponents $r_{\beta}(x)$ and $\hat{r}_\beta(x)$ are both prescribed. For all $0\leq r\le +\infty,\ 0\leq \hat{r}\leq +\infty$, let
$$R_\beta(\hat{r},r):=\left\{x \in [0,1):\hat{r}_\beta(x)=\hat{r},\ r_\beta(x)=r\right\}.$$
\begin{theorem}\label{t2}
Let $\beta>1$. The set $R_\beta(0,0)$ is of full Lebesgue measure.  When $0\leq \frac{r}{1+r}< \hat{r}\leq +\infty$, the set $R_{\beta}(\hat{r},r)$ is countable.  When $0\leq \hat{r} \leq \frac{r}{1+r},\ 0<r\leq+\infty$, we have $$\dim_{\rm H}R_{\beta}(\hat{r},r)=\frac{r-(1+r)\hat{r}}{(1+r)(r-\hat{r})}.$$
\end{theorem}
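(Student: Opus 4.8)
The plan is to transfer everything to the symbolic dynamics of the $\beta$-expansion. Writing $x=\sum_{i\ge1}\epsilon_i\beta^{-i}$ for the greedy expansion, the level-$n$ cylinder $I_n(\epsilon_1,\dots,\epsilon_n)$ has length at most $\beta^{-n}$, with equality precisely for the \emph{full} cylinders (those on which $T_\beta^n$ maps onto $[0,1)$). The essential input, the same one underlying Tan and Wang \cite{TW}, is that full cylinders are abundant and can be freely concatenated, so that prescribed digit patterns can be imposed while keeping a definite proportion of digits free. With this in hand I would record the basic dictionary: $|T_\beta^n x-x|$ is comparable to $\beta^{-m(n)}$, where $m(n)$ is the length of the \emph{self-matching} $\epsilon_{n+i}=\epsilon_i$, $1\le i\le m(n)$, at time $n$ (up to a bounded error coming from non-full cylinders). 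Under this dictionary $r_\beta(x)=r$ means $\limsup_{n}m(n)/n=r$, while $\hat r_\beta(x)=\hat r$ means that along the successive record returns $n_k$ the matching at $n_k$ must persist until $N\approx n_{k+1}$, i.e. $m(n_k)\ge\hat r\,n_{k+1}$; equivalently $\hat r_\beta(x)=\liminf_k m(n_k)/n_{k+1}$.

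Two structural observations then settle the easy cases. Since $\hat r_\beta\le r_\beta$ always, one has $R_\beta(0,0)=\{x:r_\beta(x)=0\}$, which is of full Lebesgue measure by Philipp's estimate (Section 3.1). For the countable range $\frac{r}{1+r}<\hat r\le+\infty$ I would argue that $\rho:=r/\hat r<1+r$, so the matching block $[n_k+1,(1+r)n_k]$ of a record return at rate $r$ must extend past $n_{k+1}<(1+r)n_k$; the overlap of consecutive matching blocks propagates a single period throughout the tail of $(\epsilon_i)$, forcing $x$ to be eventually periodic, hence leaving only countably many points.

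The heart is the dimension $s:=\frac{r-(1+r)\hat r}{(1+r)(r-\hat r)}$ in the range $0\le\hat r\le\frac{r}{1+r}$. For the \textbf{lower bound} I would build a Cantor set inside $R_\beta(\hat r,r)$ with geometric record returns $n_{k+1}\approx\rho\,n_k$, where $\rho=r/\hat r\ge1+r$ is exactly the ratio making the record returns yield asymptotic rate $r$ and making the worst $N$ (just below $n_{k+1}$) yield uniform rate $\hat r$. At each $n_k$ I impose a self-matching of length $rn_k$, realised on a full cylinder, and choose the remaining digits generically so that no better return is manufactured, so that $r_\beta=r$ and $\hat r_\beta=\hat r$ hold \emph{exactly}. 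Counting the free positions up to the end of the $k$-th matching block, i.e. at scale $\beta^{-(1+r)n_k}$, the disjoint blocks contribute $\sum_{j\le k}rn_j\approx rn_k\frac{\rho}{\rho-1}$ forced digits, so the free proportion is $1-\frac{r\rho}{(1+r)(\rho-1)}$, which equals $s$ after substituting $\rho=r/\hat r$; a Frostman measure on this Cantor set and the mass distribution principle \cite{FE} then give $\dim_{\rm H}\ge s$. For the \textbf{upper bound} I would cover $R_\beta(\hat r,r)$, for each admissible sequence of record returns, by the cylinders of length $\beta^{-(1+r)n_k}$ dictated by the matchings, whose number is governed by the same free-digit count $\approx s(1+r)n_k$; then for every $t>s$ the resulting sum is $\sum_k\beta^{\,(1+r)n_k(s-t)}$, which converges over the geometrically sparse sequence and over the admissible return sequences, so $\H^{t}\big(R_\beta(\hat r,r)\big)=0$ and $\dim_{\rm H}\le s$. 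Finally Theorem \ref{t1} follows by optimising $s$ over $r$ subject to $\hat r\le\frac{r}{1+r}$: the maximum is attained at $r=\frac{2\hat r}{1-\hat r}$ and equals $\big(\frac{1-\hat r}{1+\hat r}\big)^2$.

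The step I expect to be the main obstacle is controlling the non-full cylinders of $T_\beta$ uniformly in $\beta$: both realising the matching constraints on full cylinders for the lower bound and, above all, bounding the length distortion in the covering so that the defect of fullness does not perturb the exponent $s$. The other delicate point is keeping $r$ and $\hat r$ prescribed \emph{simultaneously and exactly}, which requires choosing the free digits so that they create no spurious close return that would inflate either exponent.
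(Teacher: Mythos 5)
Your overall architecture coincides with the paper's (Philipp's theorem for the measure statement, a covering argument for the upper bound, a Cantor set built on full cylinders plus the mass distribution principle for the lower bound), but your foundational ``dictionary'' is false, and the place where it fails is precisely the main technical content of the paper. Metric closeness $|T_\beta^n x-x|<\beta^{-m}$ does \emph{not} imply digit matching of length about $m$ up to bounded error: $x$ and $T_\beta^n x$ may lie on opposite sides of a cylinder endpoint (the $\beta$-analogue of $0.2000\ldots$ versus $0.1999\ldots$), so the matching length can be $0$ while $|T_\beta^n x-x|$ is arbitrarily small. What the paper proves instead (the lemma containing (\ref{e4}), (\ref{1}), (\ref{2})) is that when closeness of order $\beta^{-(m_k-n_k)}$ holds but literal matching stops at $t_k<m_k$, the digits from $t_k+1$ to $m_k$ are still \emph{forced}: they must be $\varepsilon_{t_k-n_k+1}-1$ followed by the expansion $\varepsilon^\ast(\beta)$ of $1$, or $\varepsilon_{t_k-n_k+1}+1$ followed by zeros. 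This carry structure is what rescues the two steps you treat as routine. In the countable regime, your ``overlap of matching blocks propagates a single period'' argument fails as stated, because the closeness at $n_{k+1}$ may be realized by a carry rather than by a repetition of the prefix; the tail is rigid but need not be periodic (it can involve shifted copies of $\varepsilon^\ast(\beta)$ or blocks of zeros), and the paper gets countability by explicitly building the countable family $D$ out of these three forms after first proving $n_{k+1}<m_k$ (Lemma \ref{com}). In the upper bound, a cover by literal-matching cylinders simply misses points of $R_\beta(\hat r,r)$; the paper must enlarge the cover by a factor $2\lfloor\beta\rfloor(m'_k-n'_k+1)$ per block to account for the carry forms --- harmless for the exponent, but it has to be proved.

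The second gap is in the upper bound's uniformity. You count free digits assuming the record returns are geometric, $n_{k+1}\approx(r/\hat r)\,n_k$; that is legitimate for the Cantor set you construct, but a covering must handle \emph{every} $x$ whose sequences $(n_k,m_k)$ merely satisfy the $\limsup$/$\liminf$ constraints (\ref{sup}) and (\ref{inf}). The paper spends two lemmas on exactly this: Lemma \ref{rep} extracts a subsequence with $n'_{k+1}\geq\frac{2+\hat r_\beta(x)}{\hat r_\beta(x)}n'_k$, hence $k\leq C\log n'_k$, which is what makes the $\left({n'_k}\right)^k$ choices of block positions subexponential; and Lemma \ref{pre} proves the forced-digit bound $\sum_{i\leq k}(m'_i-n'_i)\geq n'_{k+1}\left(\frac{\hat r_\beta(x) r_\beta(x)}{r_\beta(x)-\hat r_\beta(x)}-\epsilon'\right)$ for arbitrary points of the set, not just for geometric sequences. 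Your free-proportion computation $1-\frac{r\rho}{(1+r)(\rho-1)}$ with $\rho=r/\hat r$ gives the right value of $s$, but without these two lemmas it is verified only on the lower-bound configuration, not on the set you must cover. By contrast, your measure-zero case and your lower bound follow the paper's route essentially verbatim (including the need for full words and the $\beta_N$-approximation, which you flag), and those parts are fine as sketches.
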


By Theorem \ref{t2}, the following new result related to the asymptotic exponent $r_\beta(x)$ is immediate.
\begin{corollary}Let $\beta>1$. For all $0\leq r\leq +\infty$, we have
$$\dim_{\rm H} \left\{x \in [0,1):r_\beta(x)= r\right\}=\frac{1}{1+r}.$$
\end{corollary}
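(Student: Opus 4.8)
The plan is to sandwich the level set $\{x\in[0,1):r_\beta(x)=r\}$ between a subset of known dimension and a superset of known dimension, and to check that both bounds coincide. The degenerate endpoints are immediate: when $r=0$ the set $\{r_\beta=0\}$ contains $R_\beta(0,0)$, which is of full Lebesgue measure by Theorem \ref{t2}, hence of Hausdorff dimension $1=\frac{1}{1+r}$; when $r=+\infty$ the set is contained in $\{r_\beta\geq n\}$ for every $n\in\mathbb N$, so by \eqref{tw} its dimension is at most $\inf_n\frac{1}{1+n}=0$. Thus it suffices to treat $0<r<+\infty$.

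For the lower bound I would observe that any $x$ with $\hat r_\beta(x)=0$ and $r_\beta(x)=r$ certainly has $r_\beta(x)=r$, i.e. $R_\beta(0,r)\subseteq\{r_\beta=r\}$. Since $\hat r=0$ lies in the admissible range $0\leq\hat r\leq\frac{r}{1+r}$ for every $r>0$, Theorem \ref{t2} applies and gives
$$\dim_{\rm H}R_\beta(0,r)=\frac{r-(1+r)\cdot 0}{(1+r)(r-0)}=\frac{1}{1+r}.$$
Monotonicity of Hausdorff dimension under inclusion then yields $\dim_{\rm H}\{r_\beta=r\}\geq\frac{1}{1+r}$.

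For the upper bound I would use the trivial inclusion $\{r_\beta=r\}\subseteq\{r_\beta\geq r\}$ together with the Tan--Wang formula \eqref{tw}, which gives $\dim_{\rm H}\{r_\beta\geq r\}=\frac{1}{1+r}$. Hence $\dim_{\rm H}\{r_\beta=r\}\leq\frac{1}{1+r}$, and combining the two inequalities finishes the argument.

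The only point that requires care is that one should \emph{not} try to obtain the upper bound directly from Theorem \ref{t2} by writing $\{r_\beta=r\}=\bigcup_{\hat r}R_\beta(\hat r,r)$ and maximizing the dimension formula over $\hat r$: this is an uncountable union, over which Hausdorff dimension need not equal the supremum of the dimensions of the pieces. Invoking the already-established estimate \eqref{tw} for the superset $\{r_\beta\geq r\}$ avoids this subtlety entirely, which is why the corollary is genuinely immediate. As a consistency check, the dimension function $\frac{r-(1+r)\hat r}{(1+r)(r-\hat r)}$ is strictly decreasing in $\hat r$ on $[0,\frac{r}{1+r}]$, with maximal value $\frac{1}{1+r}$ attained at $\hat r=0$ and value $0$ at $\hat r=\frac{r}{1+r}$, matching the claimed answer.
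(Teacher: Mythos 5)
Your proposal is correct and amounts to exactly the deduction the paper intends: the paper states this corollary without proof as ``immediate'' from Theorem \ref{t2}, and your sandwich argument --- the lower bound from $R_\beta(0,r)\subseteq\{x:r_\beta(x)=r\}$ with $\dim_{\rm H}R_\beta(0,r)=\frac{1}{1+r}$ given by Theorem \ref{t2}, the upper bound from $\{x:r_\beta(x)=r\}\subseteq\{x:r_\beta(x)\geq r\}$ together with the Tan--Wang formula (\ref{tw}), plus the separate treatment of $r=0$ and $r=+\infty$ --- supplies precisely the omitted details. Your warning against maximizing the dimension formula over the uncountable family $R_\beta(\hat r,r)$ is also well placed, since Hausdorff dimension is only countably stable.
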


Our paper is organized as follows. Section 2 contains a brief summary of some classical results on the $\beta$-transformation without proofs. Section 3 is devoted to the proof of Theorem \ref{t2}. The proof of Theorem \ref{t1} will be given in the last section.

\section{$\beta$-transformation}
In this section, we will provide some notations and properties on $\beta$-transformation. For more information on $\beta$-transformation, see \cite{BW,AB,P,R} and the references therein.

The $\beta$-transformation is first introduced by  R\'{e}nyi \cite{R}. By the iteration of $T_{\beta}$ defined by (\ref{t}), every real number $x \in [0, 1)$ can be uniquely expanded as:
$$x=\frac{\varepsilon_1(x,\beta)}{\b}+\cdots+\frac{\varepsilon_n(x,\beta)}{\beta^n}+\cdots,$$
where $\varepsilon_n(x,\beta)=\lfloor\beta T_{\beta}^{n-1}(x)\rfloor$ is the integer part of $\beta T_{\beta}^{n-1}(x)$ for all $n\in\N$. The integer $\varepsilon_n(x,\beta)$ is called the \emph{$n$-th digit of $x$}. We call the sequence $\varepsilon(x,\beta):=(\varepsilon_1(x,\beta), \ldots,\varepsilon_n(x,\beta),\ldots)$ the \emph{$\beta$-expansion of $x$}.
%R\'{e}nyi \cite{R} proved that the transformation $T_\beta$ has an invariant ergodic measure $\nu_\beta$ which is equivalent to the Lebesgue measure $\mathcal{L}$. More precisely, there exists a constant $c_\beta:=1-\frac{1}{\beta}$ such that for any Borel measurable set $A$, we have
%\begin{equation}\label{nu}
%c_\beta \nu_\beta(A)\leq \mathcal{L} (A) \leq \frac{1}{c_\beta}\nu_\beta(A).
%\end{equation}

We can see that every $n$-th digit $\varepsilon_n(x,\beta)$ belongs to $\A:=\{0,1,\cdots,\lceil\beta\rceil-1\}$ where $\lceil x\rceil$ means the smallest integer larger than $x$. A word $(\varepsilon_1,\ldots,\varepsilon_n)\in \A^n$ is said to be $\beta$-\emph{admissible} if there is a real number $x \in [0, 1)$ such that $(\varepsilon_1(x,\beta), \ldots,\varepsilon_n(x,\beta))=(\varepsilon_1,\ldots,\varepsilon_n)$. Similarly, an infinite sequence $(\varepsilon_1,\ldots,\varepsilon_n,\ldots)$ is called $\beta$-\emph{admissible} if there exists an $x \in [0, 1)$ such that $\varepsilon(x,\beta)=(\varepsilon_1,\ldots,\varepsilon_n,\ldots)$. Denote by $\Sigma_\b^n$ the family of all $\beta$-admissible words of length $n$, that is, $$\Sigma_\b^n=\{(\varepsilon_1,\ldots,\varepsilon_n)\in \A^n: \exists\ x \in [0,1),\ {\rm s.t.\ }\varepsilon_j(x,\b)=\varepsilon_j, \forall\ 1\leq j \leq n\}.$$  Write $\Sigma_\b^\ast=\bigcup\limits_{n=1}^\infty\Sigma_\b^n$ the set of all $\beta$-admissible words with finite length. Denote by $\Sigma_\b$ the set of all $\beta$-admissible sequences, that is, $$\Sigma_\b=\{(\varepsilon_1,\varepsilon_2,\ldots)\in \A^\N: \exists\ x \in [0,1),\ {\rm s.t.}\ \varepsilon(x,\beta)=(\varepsilon_1,\varepsilon_2,\ldots)\}.$$

The \emph{lexicographical order $<_{\rm{lex}}$} in the space $\A^\N$ is defined as follows: $$(\omega_1,\omega_2,\ldots)<_{\rm{lex}}(\omega'_1,\omega'_2,\ldots)$$if $\omega_1<\omega'_1$ or there exists $j > 1$, such that for all $1 \leq k\leq j-1$, we have $\omega_k=\omega'_k$  but $\omega_j<\omega'_j$. The symbol $\leq_{\rm{lex}}$ stands for $=$ or $<_{\rm{lex}}$.
%Moreover, for any $n,m \geq 1$, $(\omega_1,\ldots,\omega_n) <_{\rm{lex}}(\omega'_1,\ldots,\omega'_m)$ means $(\omega_1,\ldots,\omega_n,0^\infty) <_{\rm{lex}}(\omega'_1,\ldots,\omega'_m,0^\infty)$.

We now extend the definition of the $\beta$-transformation to $x=1$. Let $T_\beta(1)=\beta-\lfloor \beta\rfloor$. We have $$1=\frac{\varepsilon_1(1,\beta)}{\beta}+\cdots+\frac{\varepsilon_n(1,\beta)}{\beta^n}+\cdots,$$ where $\varepsilon_n(1,\beta)=\lfloor\beta T_\beta^n(1)\rfloor.$ Specially, if the $\beta$-expansion of $1$ is finite, that is, there is an integer $m\geq 1$ such that $\varepsilon_m(1,\beta)> 0$ and  $\varepsilon_k(1,\beta)=0$ for all $k> m$, $\beta$ is called a \emph{simple Parry number}. In this case, set $$\varepsilon^\ast(\beta):=(\varepsilon_1^\ast(\beta),\varepsilon_2^\ast(\beta),\ldots)=(\varepsilon_1(1,\beta),\varepsilon_2(1,\beta),\ldots, \varepsilon_m(1,\beta)-1)^\infty$$ where $\omega^\infty=(\omega,\omega,\ldots)$. If the $\beta$-expansion of $1$ is not finite, set $\varepsilon^\ast(\beta)=\varepsilon(1,\beta)$. In both cases, we have $$1=\frac{\varepsilon_1^\ast(\beta)}{\beta}+\cdots+\frac{\varepsilon_n^\ast(\beta)}{\beta^n}+\cdots.$$ The sequence $\varepsilon^\ast(\beta)$ is consequently called \emph{the infinite $\beta$-expansion of $1$}. For any $N$ with $\varepsilon_N^\ast(\beta)>0$, let $\beta_N>1$ be the unique solution of the equation $$1=\frac{\varepsilon_1^\ast(\beta)}{x}+\cdots+\frac{\varepsilon_N^\ast(\beta)}{x^N}.$$ Immediately, the infinite $\beta_N$-expansion of $1$ is $$\varepsilon^\ast(\beta_N)=(\varepsilon_1^\ast(\beta),\ldots,\varepsilon_{N}^\ast(\beta)-1)^\infty.$$ We have $0<\beta_N<\beta$ and $\beta_N\rightarrow \beta$ as $N\rightarrow +\infty$. The real number $\beta_N$ is therefore called an \emph{approximation of $\beta$}.

The following theorem is a characterization of the $\beta$-admissible words established by Parry \cite{P}. It indicates that the $\beta$-dynamical system is totally determined by the infinite $\beta$-expansion of $1$.

\begin{theorem}[Parry \cite{P}]\label{P} Let $\beta >1$.

(1) For any $n\in \N$, $\omega=(\omega_1,\ldots,\omega_n)\in \Sigma_\beta^n$, if and only if, $$(\omega_{j},\ldots,\omega_{n}) \leq_{\rm{lex}} (\varepsilon_1^\ast,\ldots,\varepsilon_{n-j}^\ast),\ \forall\ 1 \leq j \leq n.$$

(2) If $1<\beta_1<\beta_2$, then $\varepsilon^\ast(\beta_1)<_{\rm{lex}} \varepsilon^\ast(\beta_2)$. For every $n \geq 1$, it holds that $$\Sigma_{\beta_1}^n \subseteq \Sigma_{\beta_2}^n.$$
\end{theorem}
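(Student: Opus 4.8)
The plan is to derive both parts from the order-preserving nature of the greedy algorithm together with the fact that $\varepsilon^\ast(\beta)$ is the lexicographic supremum of the $\beta$-expansions of points of $[0,1)$. I would first record the monotonicity lemma: for $x,y\in[0,1)$ one has $x<y$ if and only if $\varepsilon(x,\beta)<_{\rm lex}\varepsilon(y,\beta)$, which holds because all the remainders $T_\beta^{k}x$ lie in $[0,1)$, so the value $\sum_k\varepsilon_k\beta^{-k}$ respects the lexicographic order on admissible sequences. Since $\varepsilon^\ast(\beta)$ was constructed precisely so that $\sum_k\varepsilon_k^\ast(\beta)\beta^{-k}=1$ and so that it is the limit from below (in lexicographic order) of the expansions of the points $y<1$, every $y\in[0,1)$ satisfies $\varepsilon(y,\beta)<_{\rm lex}\varepsilon^\ast(\beta)$; comparing prefixes, each length-$\ell$ prefix of $\varepsilon(y,\beta)$ is then $\leq_{\rm lex}$ the corresponding prefix of $\varepsilon^\ast(\beta)$, with equality only before the first index of disagreement.

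For the necessity half of (1), I would use shift-invariance of the expansion: if $(\omega_1,\dots,\omega_n)$ is the string of the first $n$ digits of some $x$, then for each $j$ the tail $(\omega_j,\dots,\omega_n)$ is a prefix of $\varepsilon(T_\beta^{j-1}x,\beta)$ and $T_\beta^{j-1}x\in[0,1)$, so the prefix comparison above gives $(\omega_j,\dots,\omega_n)\leq_{\rm lex}(\varepsilon_1^\ast,\dots)$, which is exactly the stated inequality. The converse (sufficiency) is the part that requires real work: given a word satisfying all the tail conditions, I must produce a genuine $x\in[0,1)$ realizing those first $n$ digits. I would do this by extending the word to an infinite sequence $(d_k)$ that still satisfies $\sigma^k(d)\leq_{\rm lex}\varepsilon^\ast(\beta)$ for every $k$ (for instance by appending zeros, or a suitably shifted copy of $\varepsilon^\ast(\beta)$), setting $x=\sum_k d_k\beta^{-k}$, and then checking by induction that the greedy reading $\varepsilon_k(x,\beta)=\lfloor\beta T_\beta^{k-1}x\rfloor$ returns exactly $d_k$. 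The tail conditions are precisely what guarantees at each step that the running remainder stays in $[0,1)$ and that no digit overflows, so the greedy algorithm reproduces the prescribed symbols.

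For part (2), the inequality $\varepsilon^\ast(\beta_1)<_{\rm lex}\varepsilon^\ast(\beta_2)$ is proved by a direct estimate rather than by the dynamics. Writing $d^i=\varepsilon^\ast(\beta_i)$, I would argue by contradiction: if $d^1\geq_{\rm lex}d^2$, let $m$ be the first index of disagreement, the case $d^1=d^2$ being ruled out at once since $\sum_k d_k\beta_1^{-k}>\sum_k d_k\beta_2^{-k}$ whenever $d_1\geq1$. Then $d^1_m\geq d^2_m+1$ while $d^1_k=d^2_k$ for $k<m$. Using that the tail of the expansion of $1$ beyond position $m$ is at most $\beta^{-m}$, one gets $1\leq\sum_{k<m}d^2_k\beta_2^{-k}+(d^2_m+1)\beta_2^{-m}$, while dropping the nonnegative tail in the $\beta_1$-expansion of $1$ forces $1\geq\sum_{k<m}d^2_k\beta_1^{-k}+(d^2_m+1)\beta_1^{-m}$; since $\beta_1^{-k}>\beta_2^{-k}$ and the coefficient of the critical term is positive, the second right-hand side strictly exceeds the first, giving $1>1$, a contradiction. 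Finally, $\Sigma_{\beta_1}^n\subseteq\Sigma_{\beta_2}^n$ follows formally from (1): a word admissible for $\beta_1$ has every tail $\leq_{\rm lex}$ a prefix of $\varepsilon^\ast(\beta_1)$, and since $\varepsilon^\ast(\beta_1)<_{\rm lex}\varepsilon^\ast(\beta_2)$ makes each prefix of $\varepsilon^\ast(\beta_1)$ at most the corresponding prefix of $\varepsilon^\ast(\beta_2)$, transitivity yields admissibility for $\beta_2$. The main obstacle throughout is the sufficiency direction of (1), where the greedy-reproduction induction must be combined with the careful treatment of the boundary case of simple Parry numbers — which is exactly why the specially defined infinite expansion $\varepsilon^\ast(\beta)$, rather than the possibly finite $\varepsilon(1,\beta)$, is the correct object to compare against.
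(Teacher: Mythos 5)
The paper itself offers no proof of this statement: it is quoted verbatim as a classical theorem of Parry \cite{P}, and Section~2 is explicitly a summary of background ``without proofs''. So the comparison can only be against the standard argument in the literature, and measured against that your sketch follows the classical route and is essentially correct: the monotonicity of the greedy digit map, necessity of the tail conditions via shift-invariance ($(\omega_j,\ldots,\omega_n)$ is a prefix of $\varepsilon(T_\beta^{j-1}x,\beta)$), sufficiency by extending the word and re-running the greedy algorithm (appending zeros works, since $\varepsilon^\ast(\beta)$ has infinitely many nonzero digits, so the extended sequence stays strictly below $\varepsilon^\ast(\beta)$ under every shift), and, for part (2), the two-sided estimate at the first index of disagreement. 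That last contradiction is sound as you set it up: $1\geq \sum_{k<m}d^2_k\beta_1^{-k}+(d^2_m+1)\beta_1^{-m}>\sum_{k<m}d^2_k\beta_2^{-k}+(d^2_m+1)\beta_2^{-m}\geq 1$, and the inclusion $\Sigma_{\beta_1}^n\subseteq\Sigma_{\beta_2}^n$ does follow formally from part (1) plus prefix-wise comparison.

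Two steps deserve more care than you give them. First, the assertion that $\varepsilon(y,\beta)<_{\rm lex}\varepsilon^\ast(\beta)$ for every $y\in[0,1)$ is, for simple Parry numbers, not a consequence of your monotonicity lemma alone: that lemma compares greedy expansions of points of $[0,1)$, whereas $\varepsilon^\ast(\beta)=(\varepsilon_1(1,\beta),\ldots,\varepsilon_m(1,\beta)-1)^\infty$ is then a quasi-greedy sequence, not the greedy expansion of any point. You need its shift-maximality, $\sigma^k\varepsilon^\ast(\beta)\leq_{\rm lex}\varepsilon^\ast(\beta)$ for all $k$, or the limit description $\varepsilon^\ast(\beta)=\lim_{y\uparrow 1}\varepsilon(y,\beta)$, proved from the explicit periodic form; asserted without proof, your necessity step comes close to invoking the infinite-sequence version of the very criterion being established. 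Second, the tail bound $\sum_{k>m}\varepsilon^\ast_k(\beta)\beta^{-k}\leq\beta^{-m}$ used in part (2) is not a crude digit estimate --- digits can be as large as $\lceil\beta\rceil-1$, and $(\lceil\beta\rceil-1)/(\beta-1)>1$ when $\beta<2$ --- it again rests on $T_\beta^m(1)\in[0,1]$ in the greedy case and on shift-maximality in the quasi-greedy case. Both facts are standard and easily supplied, so your route goes through once these are made explicit.
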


The following theorem due to R\'{e}nyi \cite{R} shows that the dynamical system $([0,1), T_\beta)$ has topological entropy  $\log_\beta$. Here and subsequently, we denote by $\sharp$ the cardinality of a finite set.
\begin{theorem}[R\'{e}nyi \cite{R}]\label{R}
For any $n \geq 1$, we have $$\beta^n \leq \sharp \Sigma_\beta^n \leq \frac{\beta^{n+1}}{\beta-1}{\rm\quad and}\quad \lim_{n\rightarrow \infty}\frac{\log\sharp \Sigma_\beta^n}{n}=\log \beta.$$
\end{theorem}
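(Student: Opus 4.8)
My plan is to work entirely with the cylinder partition. For $\omega=(\omega_1,\dots,\omega_n)\in\Sigma_\beta^n$, write $I(\omega)=\{x\in[0,1):\varepsilon_j(x,\beta)=\omega_j,\ 1\le j\le n\}$; these intervals tile $[0,1)$, and on each of them $T_\beta^n$ is affine with slope $\beta^n$ and maps $I(\omega)$ onto a left-aligned interval $[0,s_\omega)\subseteq[0,1)$ (the left-alignment follows from the greedy nature of the expansion and Parry's characterization). Hence $|I(\omega)|=s_\omega\beta^{-n}\le\beta^{-n}$. Call $\omega$ (or $I(\omega)$) \emph{full} if $s_\omega=1$ and \emph{non-full} otherwise. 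The lower bound is then immediate: since the $I(\omega)$ partition $[0,1)$, writing $N_n:=\sharp\Sigma_\beta^n$ gives $1=\sum_{\omega\in\Sigma_\beta^n}|I(\omega)|\le N_n\beta^{-n}$, so $N_n\ge\beta^n$.

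For the upper bound I would set up a recursion between levels. Let $F_n$ denote the number of full cylinders of order $n$. These have length exactly $\beta^{-n}$ and are pairwise disjoint in $[0,1)$, so $F_n\beta^{-n}\le 1$, that is $F_n\le\beta^n$. The crux is the combinatorial claim that \emph{every cylinder of order $n-1$ has exactly one non-full child of order $n$.} Granting this, the map sending a non-full order-$n$ cylinder to its parent (delete the last digit) is a bijection onto $\Sigma_\beta^{n-1}$, so the number of non-full order-$n$ cylinders equals $N_{n-1}$, and therefore $N_n=F_n+N_{n-1}$. Telescoping from $N_0=1$ yields
$$N_n=1+\sum_{k=1}^n F_k\le 1+\sum_{k=1}^n\beta^k=\frac{\beta^{n+1}-1}{\beta-1}<\frac{\beta^{n+1}}{\beta-1}.$$
To justify the claim I would subdivide the image $[0,s_\omega)$ of a parent cylinder by the grid $\{j/\beta\}$: each grid cell $[j/\beta,(j+1)/\beta)$ lying entirely inside $[0,s_\omega)$ produces a full child, while the rightmost, partially covered cell produces a non-full child. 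This gives at most one non-full child, and exactly one provided $s_\omega$ is never itself a grid point $m/\beta$. The latter is where I would invoke the paper's convention of using the \emph{infinite} expansion $\varepsilon^\ast(\beta)$: for a non-full cylinder $s_\omega$ is a tail $\sum_{i\ge1}\varepsilon^\ast_{\ell+i}(\beta)\beta^{-i}$, and since this tail never terminates in zeros we get $\beta s_\omega=\varepsilon^\ast_{\ell+1}(\beta)+s'$ with $0<s'<1$, so $\beta s_\omega\notin\Z$ and $s_\omega$ avoids the grid.

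Finally, the entropy statement follows by squeezing: taking logarithms in $\beta^n\le N_n\le\beta^{n+1}/(\beta-1)$ and dividing by $n$ gives $\log\beta\le \tfrac1n\log N_n\le \tfrac{n+1}{n}\log\beta-\tfrac1n\log(\beta-1)$, whose right side tends to $\log\beta$, so $\lim_{n\to\infty}\tfrac1n\log\sharp\Sigma_\beta^n=\log\beta$. I expect the main obstacle to be a fully rigorous proof of the "exactly one non-full child" lemma, together with cleanly handling the degenerate cases (integer $\beta$, where every cylinder is full and $N_n=\beta^n$ directly, and simple Parry numbers, where the infinite-expansion convention is exactly what prevents $s_\omega$ from landing on a grid point); everything else reduces to the tiling identity and the disjointness bound $F_k\le\beta^k$.
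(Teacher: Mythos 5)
Your key combinatorial lemma --- \emph{every} cylinder of order $n-1$ has \emph{exactly} one non-full child --- is false whenever $\beta$ is a simple Parry number, and the patch you propose does not rescue it. Take $\beta$ the golden ratio, so $1=\beta^{-1}+\beta^{-2}$ and $\varepsilon^\ast(\beta)=(1,0)^\infty$. The cylinder $I(1)=[1/\beta,1)$ has image $[0,s)$ with $s=1/\beta$, hence $\beta s=1\in\Z$: its unique child $I(10)$ has image $[0,1)$ and is full, so $I(1)$ has \emph{no} non-full child. Concretely, $N_2=3$ (words $00,01,10$) while your recursion predicts $N_2=F_2+N_1=2+2=4$; in fact for the golden ratio $N_n$ is a Fibonacci number, not $1+\sum_k F_k$. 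Your tail argument breaks at the inequality $0<s'<1$: tails of the \emph{infinite} expansion $\varepsilon^\ast(\beta)$ need not have value strictly less than $1$. By definition $\sum_{i\geq 1}\varepsilon_i^\ast(\beta)\beta^{-i}=1$, and for a simple Parry number $\varepsilon^\ast(\beta)$ is purely periodic, so proper tails recur to the value exactly $1$ --- which is precisely the signal that the would-be non-full child is in fact full. So the degenerate case you flagged as a worry is a genuine counterexample, not merely a technicality.

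The repair, however, is immediate and costs you nothing: for the upper bound you only need \emph{at most} one non-full child per parent, which is automatic from your own picture, since a non-full child with digit $d$ requires $d\in(\beta s_\omega-1,\,\beta s_\omega)$, an open interval of length $1$ containing at most one integer. This gives the inequality $N_n\leq F_n+N_{n-1}$, and telescoping with $F_k\leq\beta^k$ yields $N_n\leq 1+\sum_{k=1}^n\beta^k=\frac{\beta^{n+1}-1}{\beta-1}<\frac{\beta^{n+1}}{\beta-1}$, exactly as desired; your lower bound $N_n\geq\beta^n$ via the tiling identity and the squeeze for the entropy limit are both correct as written. Note finally that the paper itself offers no proof to compare against --- it states this as R\'enyi's theorem, citing \cite{R}, in a section explicitly presented ``without proofs'' --- so your argument, once weakened from a bijection to an injection from non-full order-$n$ cylinders into $\Sigma_\beta^{n-1}$, stands as a complete self-contained proof.
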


For any $\beta$-admissible word $\omega=(\omega_1,\ldots,\omega_n)$, let $$I_n(\omega):=I_n(\omega,\beta)= \{x \in [0,1): \varepsilon_j(x,\b)=\omega_j,\ \forall\  1 \leq j \leq n\}.$$ The set $I_n(\omega)$ is called the \emph{cylinder of order $n$ associated to the $\beta$-admissible word $\omega$}.  It can be checked that the cylinder $I_n(\omega)$ is a left-closed and right-open interval (see \cite{AB}). Denote by $|I_n(\omega)|$ the length of $I_n(\omega)$. Then $|I_n(\omega)|\leq \beta^{-n}$. Let $I_n(x,\beta)$ be the cylinder of order $n$ which contains $x\in [0,1)$. To shorten notation, we write $I_n(x)$ instead of $I_n(x,\beta)$ and denote by $|I_n(x)|$ its length. The cylinder of order $n$ is called \emph{full} if  $|I_n(\omega)| = \b^{-n}$. The corresponding word of the full cylinder is also said to be {\it full}.

The full word plays an important role in constructing a Cantor set for the aim of estimating the lower bound of $\dim_{\rm H}R_\beta(\hat{r},r)$. A characterization of full words was given by Fan and Wang \cite{AB} as follows.
\begin{theorem}[Fan and Wang \cite{AB}]\label{AB}
For any $n\in\N$, the word $\omega=(\omega_1,\ldots,\omega_n)$ is full if and only if for all $m \in \N$ and $\omega'= (\omega'_1,\ldots,\omega'_m)\in \Sigma_\b^m$, the concatenation $\omega \ast \omega'=(\omega_1, \ldots,\omega_n,\omega'_1,\ldots, \omega'_m)$ is still $\beta$-admissible. Moreover, if $(\omega_1,\ldots,\omega_{n-1},\omega'_n)$ with $\omega'_n > 0$ is $\beta$-admissible, then $(\omega_1,\ldots,\omega_{n-1},\omega_n)$ is full for any  $0 \leq \omega_n < \omega'_n$.

%(2) For all $n\in\N$, if $(\omega_1,\ldots,\omega_{n-1},\omega'_n)$ with $\omega'_n > 0$ is $\beta$-admissible, then $(\omega_1,\ldots,\omega_{n-1},\omega_n)$ is full for every  $0 \leq \omega_n < \omega'_n$.

%(2) For all $n\in\N$, if $\omega=(\omega_1,\ldots,\omega_n)$ is full, then we have $$|I_{n+m}(\omega_1,\ldots, \omega_n,\omega'_1,\ldots,\omega'_m)| = \beta^{-n}\cdot |I_m(\omega'_1,\ldots,\omega'_m)|,$$ for any $(\omega'_1,\ldots,\omega'_m)\in \Sigma_\b^m$.
\end{theorem}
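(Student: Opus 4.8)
The plan is to reduce this combinatorial statement about admissible words to the elementary affine geometry of the iterate $T_\beta^n$ on a single cylinder. The first step I would carry out is to record the basic fact that on $I_n(\omega)$ the map $T_\beta^n$ acts as $x\mapsto \beta^n(x-\ell)$, where $\ell=\sum_{i=1}^n \omega_i\beta^{-i}$ is the left endpoint of the cylinder; hence $T_\beta^n$ is an increasing affine bijection of slope $\beta^n$ carrying $I_n(\omega)$ onto the interval $[0,\beta^n|I_n(\omega)|)$. In particular $|I_n(\omega)|=\beta^{-n}|T_\beta^n(I_n(\omega))|$, so $\omega$ is full if and only if $T_\beta^n(I_n(\omega))=[0,1)$. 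This single equivalence is the hinge on which both assertions turn, and I would establish it before anything else.

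For the first assertion I would argue through this equivalence. If $\omega$ is full and $\omega'\in\Sigma_\beta^m$, then $I_m(\omega')$ is a nonempty subinterval of $[0,1)$; picking $y\in I_m(\omega')$ and using that $T_\beta^n$ maps $I_n(\omega)$ onto all of $[0,1)$, I obtain $x\in I_n(\omega)$ with $T_\beta^n x=y$, whose first $n+m$ digits are exactly $\omega\ast\omega'$, so the concatenation is admissible. The converse is the delicate direction: assuming $\omega$ is not full, the image $T_\beta^n(I_n(\omega))=[0,c)$ is a proper subinterval with $c<1$, and I would pick $y\in(c,1)$ and take $m$ so large that $\beta^{-m}<y-c$. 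Since the cylinder $I_m(y)$ has length at most $\beta^{-m}$ and therefore left endpoint exceeding $y-\beta^{-m}>c$, it is disjoint from $[0,c)$; this forces the cylinder of $\omega\ast\omega'$, with $\omega'=(\varepsilon_1(y,\beta),\ldots,\varepsilon_m(y,\beta))\in\Sigma_\beta^m$, to be empty, contradicting the admissibility of the concatenation. I expect this construction—turning ``proper subinterval'' into a concrete admissible word that cannot be appended—to be the main obstacle, the care lying in the endpoint bookkeeping for the cylinder lengths.

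For the ``moreover'' part I would decompose $I_{n-1}(\omega_1,\ldots,\omega_{n-1})$ according to its last digit. Writing $d=\beta^{n-1}|I_{n-1}(\omega_1,\ldots,\omega_{n-1})|$ for the right endpoint of $T_\beta^{n-1}(I_{n-1})$, the subcylinder with $n$-th digit $k$ corresponds, under $T_\beta^{n-1}$, to $[k/\beta,(k+1)/\beta)\cap[0,d)$; reading off its length shows that $I_n(\omega_1,\ldots,\omega_{n-1},k)$ is full exactly when $(k+1)/\beta\le d$. The admissibility of $(\omega_1,\ldots,\omega_{n-1},\omega'_n)$ with $\omega'_n>0$ says precisely that $\omega'_n/\beta<d$, and then for any integer $0\le\omega_n<\omega'_n$ one has $(\omega_n+1)/\beta\le\omega'_n/\beta<d$, so $I_n(\omega_1,\ldots,\omega_{n-1},\omega_n)$ is full. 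This last step is a pure monotonicity observation once the geometric picture of the first paragraph is in place.
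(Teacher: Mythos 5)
This theorem is stated in the paper as a cited result of Fan and Wang \cite{AB} and is never proved there, so there is no in-paper argument to compare yours against. Judged on its own, your proof is correct, and it is essentially the standard argument for this characterization: identify fullness of $\omega$ with $T_\beta^n(I_n(\omega))=[0,1)$, obtain the forward implication from surjectivity of $T_\beta^n$ on the cylinder, obtain the converse by exhibiting an admissible word whose cylinder sits entirely beyond the image $[0,c)$, and get the ``moreover'' part from the alignment $(\omega_n+1)/\beta\leq \omega_n'/\beta<d$. All three steps are sound as you set them up.

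The one thing you should not merely ``record'' is the claim that the image of the cylinder is anchored at $0$, i.e.\ that the left endpoint of $I_n(\omega)$ equals $\ell=\sum_{i=1}^{n}\omega_i\beta^{-i}$. The affine formula $T_\beta^n x=\beta^n(x-\ell)$ on $I_n(\omega)$ is immediate by telescoping the digit definition, but $\ell\in I_n(\omega)$ is not: it asserts that $(\omega_1,\ldots,\omega_n,0,0,\ldots)$ is the genuine $\beta$-expansion of $\ell$, and your argument really uses it twice --- in the converse, where you need the image to be $[0,c)$ rather than some $[a,c)$ with $a>0$, and in the ``moreover'' step, where fullness of the digit-$\omega_n$ subcylinder needs $[\omega_n/\beta,(\omega_n+1)/\beta)$ to lie inside the image, which is automatic only when the image starts at $0$. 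The fact can be supplied from the paper's own toolkit: by Parry's criterion (Theorem \ref{P}(1)) every truncation $(\omega_1,\ldots,\omega_n,0^k)$ is admissible, since appending zeros to a word keeps every suffix $\leq_{\rm lex}$ the corresponding prefix of $\varepsilon^\ast(\beta)$ (the zero word is lexicographically minimal); hence each cylinder $I_{n+k}(\omega,0^k)$ is nonempty and, by the affine formula, contained in $[\ell,\ell+\beta^{-(n+k)})$. Choosing a point in each of these cylinders produces a sequence in $I_n(\omega)$ converging to $\ell$, while the affine formula forces every point of $I_n(\omega)$ to be at least $\ell$; since $I_n(\omega)$ is a left-closed interval (the fact the paper cites from \cite{AB}), its left endpoint is $\ell$ and is attained. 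With that lemma in place, your proof is complete.
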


It follows from Theorem \ref{P}(2) that $\Sigma_{\beta_N}^n\subseteq\Sigma_{\beta}^n$ for all $n\geq 1$. For any $\omega\in \Sigma_{\beta_N}^\ast$, by Theorem \ref{AB}, the word $(\omega,0^N)$ is full when $\omega$ is regarded as an element of $\Sigma_{\beta}^n$. As a result,
\begin{equation}\label{in}
\beta^{-(n+N)}\leq |I_n(\omega,\beta)|\leq \beta^{-n}.
\end{equation}

Furthermore, the following theorem due to Bugeaud and Wang \cite{BW} implies that the full cylinders are well distributed in the unit interval $[0,1)$.
\begin{theorem}[Bugeaud and Wang \cite{BW}]\label{no}
There is at least one full cylinder for all $n+1$ consecutive cylinders of order $n$.
\end{theorem}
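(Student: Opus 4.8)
The plan is to convert ``full'' into a combinatorial condition on the underlying word and then show that a maximal block of consecutive non-full cylinders contains at most $n$ of them; the statement follows immediately from this. Throughout, order the order-$n$ cylinders from left to right, which coincides with the $<_{\rm lex}$-order of their admissible words, and write $\varepsilon^\ast(\beta)=(\varepsilon_1^\ast,\varepsilon_2^\ast,\dots)$, recalling $\varepsilon_1^\ast=\lfloor\beta\rfloor\ge1$ and the self-admissibility $(\varepsilon_{j+1}^\ast,\varepsilon_{j+2}^\ast,\dots)<_{\rm lex}(\varepsilon_1^\ast,\varepsilon_2^\ast,\dots)$.

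First I would record the characterization of full words: combining Theorem \ref{AB} with Parry's criterion (Theorem \ref{P}), an admissible $\omega=(\omega_1,\dots,\omega_n)$ is full if and only if $(\omega_i,\dots,\omega_n)<_{\rm lex}(\varepsilon_1^\ast,\dots,\varepsilon_{n-i+1}^\ast)$ for every $1\le i\le n$; equivalently, $\omega$ is non-full if and only if some suffix is \emph{saturated}, $(\omega_i,\dots,\omega_n)=(\varepsilon_1^\ast,\dots,\varepsilon_{n-i+1}^\ast)$. For non-full $\omega$ let $s(\omega)\in\{1,\dots,n\}$ be the length of the longest saturated suffix, so $\omega=(\omega_1,\dots,\omega_{n-s},\varepsilon_1^\ast,\dots,\varepsilon_s^\ast)$ with $s=s(\omega)$.

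The core of the proof is the claim that, moving one step to the right, $s$ strictly increases as long as we stay among non-full cylinders. Because the saturated tail is the largest admissible continuation of the prefix $(\omega_1,\dots,\omega_{n-s})$, it cannot be raised, so the successor cylinder is produced by a carry at some position $p\le n-s$ followed by the minimal (all-zero) continuation; thus the successor word ends in at least $s$ zeros. Since $\varepsilon_1^\ast\ge1$, no saturated suffix can begin inside this block of trailing zeros, so if the successor is again non-full its longest saturated suffix must begin at a position $\le p$ and therefore have length $\ge s+1$. Consequently $s$ takes strictly increasing values in $\{1,\dots,n\}$ across a maximal non-full block, which forces the block to have at most $n$ members; hence among any $n+1$ consecutive cylinders at least one is full. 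As a check, the rightmost cylinder is $(\varepsilon_1^\ast,\dots,\varepsilon_n^\ast)$ with $s=n$, and $s$ descends to $0$, a full cylinder, within $n$ steps to its left.

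The step I expect to require the most care is the strict-increase lemma, and inside it the two lexicographic facts it rests on: that the digit $\omega_{n-s}$ preceding a longest saturated suffix satisfies $\omega_{n-s}<\varepsilon_1^\ast$ (otherwise the self-admissibility of $\varepsilon^\ast(\beta)$ would let the match extend, contradicting maximality of $s$), which guarantees the carry does not disturb the saturated structure further left than position $n-s$, and that the all-zero tail created by the carry cannot itself be saturated. Making precise how the carry position $p$ and the minimal continuation interact with the possibly long zero-stretches of $\varepsilon^\ast(\beta)$ is the only delicate bookkeeping; everything else is immediate from Theorems \ref{AB} and \ref{P}.
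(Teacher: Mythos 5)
The paper itself states Theorem \ref{no} without proof (it is quoted directly from Bugeaud and Wang \cite{BW}), so your attempt has to be judged on its own merits, and its core mechanism is in fact correct and complete: every non-full word has a saturated suffix; if the longest one has length $s$, the lexicographic successor is obtained by raising a digit at some position $p\le n-s$ and padding with zeros (because the saturated tail is the lexicographically largest admissible completion of its prefix, it cannot be raised, and zero-padding preserves admissibility); since $\varepsilon_1^\ast\ge 1$, every saturated suffix of that successor must start at a position $\le p$ and hence has length $\ge n-p+1\ge s+1$; therefore the longest-saturated-suffix length strictly increases along any run of consecutive non-full cylinders, such runs have at most $n$ members, and the theorem follows. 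This is the same combinatorial spirit as the original argument in \cite{BW}.

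The one genuine error is the ``if and only if'' you record at the outset: fullness is \emph{not} equivalent to the absence of a saturated suffix. The equivalence fails exactly when $\beta$ is a simple Parry number, i.e.\ when $\varepsilon^\ast(\beta)$ is periodic. Take $\beta$ the golden mean, so that $\varepsilon^\ast(\beta)=(1,0)^\infty$ and admissibility means avoiding the block $11$: the word $(1,0)$ is its own saturated suffix of length $2$, yet $I_2(1,0)=[1/\beta,1)$ has length $1-1/\beta=\beta^{-2}$, so this cylinder is full --- indeed appending any admissible word to $(1,0)$ can never create the block $11$. What Theorems \ref{AB} and \ref{P} actually give in general is only the implication ``no saturated suffix $\Rightarrow$ full'' (equivalently, non-full $\Rightarrow$ some suffix is saturated). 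Fortunately, an inspection of your argument shows that this true direction is the only one you ever use: once to define $s(\omega)$ for a non-full $\omega$, and once to conclude that a non-full successor possesses a saturated suffix (necessarily of length $\ge s+1$). So your proof survives verbatim after you weaken the stated lemma to that single implication. Two side remarks fall victim to the same error: your closing ``check'' is wrong, since the rightmost cylinder $(\varepsilon_1^\ast,\ldots,\varepsilon_n^\ast)$ can perfectly well be full (golden mean, $n$ even), and the self-admissibility of $\varepsilon^\ast(\beta)$ should be stated with $\leq_{\rm lex}$ rather than $<_{\rm lex}$, strictness again failing in the periodic case; your extension argument showing $\omega_{n-s}<\varepsilon_1^\ast$ only needs the weak inequality, so it is unaffected.
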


We end this section by giving the modified mass distribution principle shown by Bugeaud and Wang \cite{BW} which is  an important tool to estimate the lower bound of the Hausdorff dimension in $\beta$-dynamical system.  For convenience, for all $x\in[0,1)$, we write $I_n(x)$ as $I_n$ without any ambiguity.
\begin{theorem}[Bugeaud and Wang \cite{BW}]\label{mp}
Let $\mu$ be a Borel measure supported on $E$. If there exists a constant $c>0$ such that for all integer $n$ large enough , the inequality $\mu(I_n)\leq c|I_n|^s$ holds for all cylinders $I_n$. Then $\dim_{\rm H}E\geq s.$
\end{theorem}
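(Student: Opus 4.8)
The plan is to follow the proof of the classical mass distribution principle, the only new ingredient being a passage from arbitrary covering sets to the cylinders on which the hypothesis $\mu(I_n)\le c|I_n|^s$ is available. Throughout I regard $\mu$ as a nonzero finite Borel measure with $\mu(E)>0$. Since we work in $[0,1)$, I would first reduce to intervals: any set $U$ is contained in an interval $\tilde U$ with $|\tilde U|=\mathrm{diam}(U)$ and $\mu(U)\le\mu(\tilde U)$, so it suffices to bound $\mu(U)$ for intervals $U$ of small length, and then to run the Hausdorff-measure estimate on covers by intervals.

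Given such an interval $U$, I would choose the integer $n=n(U)$ with $\beta^{-(n+1)}\le |U|<\beta^{-n}$; this forces $n\to\infty$ as $|U|\to0$, so the hypothesis applies once $|U|$ is small. The heart of the argument is to bound the number of order-$n$ cylinders that $U$ can meet. These cylinders form a consecutive block $J_1,\dots,J_k$, and all the interior ones $J_2,\dots,J_{k-1}$ are entirely contained in $U$. If there were at least $n+1$ interior cylinders, then by Theorem \ref{no} one of these $n+1$ consecutive order-$n$ cylinders would be full, hence of length $\beta^{-n}>|U|$, contradicting its containment in $U$. Thus $k\le n+2$. Combining this with $|J_j|\le\beta^{-n}$ and the hypothesis yields
\[
\mu(U)\le\sum_{j=1}^{k}\mu(J_j)\le k\,c\,\beta^{-ns}\le (n+2)\,c\,\beta^{s}|U|^{s},
\]
where the final step uses $\beta^{-n}\le\beta|U|$.

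With this H\"older-type bound in hand, I would fix $\epsilon>0$ and take any $\delta$-cover $\{U_i\}$ of $E$ by intervals with $\delta$ small. Since $n(U_i)+2$ grows only logarithmically in $1/|U_i|$, we have $(n(U_i)+2)\,|U_i|^{\epsilon}\le C_\epsilon$ once $\delta$ is small, whence
\[
\mu(E)\le\sum_i\mu(U_i)\le c\beta^s\sum_i (n(U_i)+2)|U_i|^s\le c\beta^s C_\epsilon\sum_i |U_i|^{s-\epsilon}.
\]
Taking the infimum over all such covers gives $\H^{s-\epsilon}(E)\ge \mu(E)/(c\beta^sC_\epsilon)>0$, so $\dim_{\rm H}E\ge s-\epsilon$; letting $\epsilon\to0$ finishes the proof.

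The only genuinely nonclassical step, and hence the main obstacle, is the cylinder-counting bound $k\le n+2$. Because the order-$n$ cylinders of a $\beta$-system are highly irregular, with lengths ranging between $\beta^{-(n+N)}$ and $\beta^{-n}$, a short interval could a priori overlap very many of them; it is exactly the well-distribution of full cylinders provided by Theorem \ref{no} that caps the count. The resulting logarithmic factor $n+2$ is harmless and is absorbed by degrading the exponent from $s$ to $s-\epsilon$.
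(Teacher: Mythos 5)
Your proposal is correct, and since the paper states Theorem \ref{mp} without proof, quoting it from Bugeaud and Wang \cite{BW}, the relevant comparison is with that source: your argument is essentially the same as theirs. The key step --- bounding by $n+2$ the number of order-$n$ cylinders that an interval $U$ with $\beta^{-(n+1)}\le |U|<\beta^{-n}$ can meet, using the well-distribution of full cylinders (Theorem \ref{no}) to rule out $n+1$ consecutive cylinders lying inside $U$ --- is exactly the mechanism in \cite{BW}, and your $\epsilon$-loss bookkeeping that absorbs the logarithmic factor $n+2$ is a harmless variant of the usual local-dimension formulation of the same estimate.
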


\section{ Proof of Theorems \ref{t2}}
Before our proof, we will give some useful lemmas.
\begin{lemma}\label{mn}
For any $x\in [0,1)$ whose $\beta$-expansion is not periodic and $r_\beta(x)>0$, there exist two sequences   $\{n_{k}\}_{k=1}^\infty$ and $\{m_{k}\}_{k=1}^\infty$ such that
\begin{equation}\label{sup}
r_\beta(x)=\limsup_{k\rightarrow \infty}\frac{m_{k}-n_{k}}{n_{k}}
\end{equation}
and
\begin{equation}\label{inf}
\hat{r}_\beta(x)=\liminf_{k\rightarrow \infty}\frac{m_{k}-n_{k}}{n_{{k}+1}}.
\end{equation}
\end{lemma}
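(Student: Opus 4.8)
The plan is to recast both exponents as logarithmic growth rates and then read them off a single sequence of ``record'' return times. Writing $d_n=|T^n_\beta x-x|$, I would first unwind the two suprema in the definitions. Since $|T^n_\beta x-x|<(\beta^n)^{-r}$ for infinitely many $n$ is equivalent to $\frac{-\log_\beta d_n}{n}>r$ infinitely often, and ``for all $N\gg1$ there is $n\in[1,N]$ with $|T^n_\beta x-x|<(\beta^N)^{-\hat r}$'' is equivalent to $\frac{-\log_\beta \min_{1\le n\le N}d_n}{N}>\hat r$ for all large $N$, taking suprema over the admissible $r,\hat r$ gives
\[
r_\beta(x)=\limsup_{n\to\infty}\frac{-\log_\beta d_n}{n},\qquad
\hat r_\beta(x)=\liminf_{N\to\infty}\frac{-\log_\beta\big(\min_{1\le n\le N}d_n\big)}{N}.
\]
Because the $\beta$-expansion of $x$ is not periodic we have $T^n_\beta x\neq x$, hence $d_n>0$, for every $n\ge1$; and $r_\beta(x)>0$ forces $\liminf_{n}d_n=0$.

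Next I would introduce the return times at which the orbit sets a new closeness record: let $n_1<n_2<\cdots$ be exactly those indices $n$ with $d_n<d_{n'}$ for all $n'<n$, and put $m_k:=n_k+\lfloor-\log_\beta d_{n_k}\rfloor$, so that $m_k-n_k=-\log_\beta d_{n_k}+O(1)$. Since $\liminf_n d_n=0$, the records improve indefinitely, so there are infinitely many of them and $n_k\to\infty$. By construction $d_{n_k}=\min_{1\le n\le n_k}d_n$, and no strictly smaller value occurs before $n_{k+1}$, whence
\[
\min_{1\le n\le N}d_n=d_{n_k}\qquad\text{for every }N\in[n_k,n_{k+1}).
\]

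For the first formula I would show the limsup in $r_\beta(x)$ is already attained along $\{n_k\}$. Given any $n$ with $\frac{-\log_\beta d_n}{n}$ close to $r_\beta(x)$, let $n_k\le n$ be the last record time up to $n$; then $d_{n_k}\le d_n$ together with $n_k\le n$ yields $\frac{-\log_\beta d_{n_k}}{n_k}\ge\frac{-\log_\beta d_n}{n}$, so the record times realize at least the full limsup, while trivially they cannot exceed it. Dividing $m_k-n_k=-\log_\beta d_{n_k}+O(1)$ by $n_k\to\infty$ kills the bounded error and gives $r_\beta(x)=\limsup_k\frac{m_k-n_k}{n_k}$.

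For the second formula I would use the displayed identity for $\min_{1\le n\le N}d_n$: on each block $N\in[n_k,n_{k+1})$ the quantity $\frac{-\log_\beta d_{n_k}}{N}$ is decreasing in $N$, so its infimum over the block is attained as $N\uparrow n_{k+1}$, and at $N=n_{k+1}$ the value jumps up because $d_{n_{k+1}}<d_{n_k}$. Hence the global liminf equals $\liminf_k\frac{-\log_\beta d_{n_k}}{n_{k+1}}$, and again replacing $-\log_\beta d_{n_k}$ by $m_k-n_k$ (error $O(1)/n_{k+1}\to0$) gives $\hat r_\beta(x)=\liminf_k\frac{m_k-n_k}{n_{k+1}}$. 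The only genuinely delicate points are the two reformulations of the exponents and the localization of the liminf at the right endpoints $n_{k+1}$; I expect the latter---verifying that the last record before $N$ carries the minimum and that the per-block minimum sits at $n_{k+1}$---to be the main thing to get exactly right, whereas defining $m_k-n_k$ directly through $-\log_\beta d_{n_k}$ neatly sidesteps any need to compare $d_{n_k}$ with a common-prefix length, where digit cancellation could otherwise intervene.
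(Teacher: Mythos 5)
Your proof is correct, but it takes a genuinely different route from the paper's. The paper constructs the sequences symbolically: its $n_k$ runs through the positions at which the digit sequence returns to its first digit ($\varepsilon_{n_k+1}=\varepsilon_1$), it sets $m_k:=\max\{n\ge n_k:|T_\beta^{n_k}x-x|<\beta^{-(n-n_k)}\}$ (which is, up to an integer-boundary case, your $n_k+\lfloor-\log_\beta d_{n_k}\rfloor$), and it then extracts a further subsequence along which $m_k-n_k$ is non-decreasing --- that extraction is the paper's analogue of your record times; the limsup identity is verified by an $\epsilon$-argument and the liminf identity is explicitly left to the reader. Your construction is purely metric (records of $d_n=|T_\beta^nx-x|$ over all $n$, plus the reformulations $r_\beta(x)=\limsup_n\frac{-\log_\beta d_n}{n}$ and $\hat r_\beta(x)=\liminf_N\frac{-\log_\beta\min_{1\le n\le N}d_n}{N}$, both of which you justify correctly), and it buys two real advantages: (i) the key inequality $d_n\ge d_{n_k}$ for $n_k\le n<n_{k+1}$ holds by the very definition of records, whereas the paper's corresponding claim, $|T_\beta^nx-x|\ge\beta^{-(m_k-n_k)-1}$ for all $n$ between consecutive selected times, is justified only after ruling out near-returns at positions with $\varepsilon_{n+1}\neq\varepsilon_1$ (where $T_\beta^nx$ and $x$ straddle a cylinder endpoint), a short additional argument the paper leaves implicit; (ii) you prove the liminf identity in full, via the block identity $\min_{1\le n\le N}d_n=d_{n_k}$ for $N\in[n_k,n_{k+1})$ and the localization of the per-block minimum at the right endpoint, which is exactly the part the paper omits. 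What the paper's symbolic route buys in exchange is that its $n_k$ carry the digit-recurrence property $(\varepsilon_{n_k+1},\ldots,\varepsilon_{n_k+j})=(\varepsilon_1,\ldots,\varepsilon_j)$, which is not in the statement of Lemma \ref{mn} but is used crucially afterwards: the quantity $t_k$ in (\ref{tk}) is well defined only for digit-matching $n_k$, and the structure formulas (\ref{e4}), (\ref{1}), (\ref{2}) as well as the countability argument of Section 3.2 all rest on it. So your argument does prove the lemma as stated, but to plug your sequences into the rest of the paper one would still need to check that your record times are eventually digit-matching (e.g.\ because $d_{n_k}\to0$ and, under the hypotheses $r_\beta(x)>0$ and non-periodicity, $x$ lies in the interior of its first-order cylinder).
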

\begin{proof}
Suppose $\varepsilon(x,\beta)=(\varepsilon_1,\varepsilon_2,\ldots)$. Let $$n_1=\min\{n\geq1:\varepsilon_{n+1}=\varepsilon_1\},\ \ m_1=\max\{n\geq n_1: |T_\beta^{n_1} x-x|< \beta^{-(n-n_1)}\}.$$ Suppose that for all $k\geq 1$, $n_k$ and $m_k$ have been defined. Set $$n_{k+1}=\min\{n\geq n_k:\varepsilon_{n+1}=\varepsilon_1\},\  m_{k+1}=\max\{n\geq n_{k+1}: |T_\beta^{n_k} x-x|< \beta^{-(n-n_k)}\}.$$ Note that $r_\beta(x)>0$. We always can find the position $n_k$ such that $\varepsilon_{n_k+1}$ returns to $\varepsilon_1$ which means $n_k$ is well defined. Since $\beta^{-n}$ is decreasing to $0$ as $n$ goes to infinity and $\varepsilon(x,\beta)$ is not periodic, $m_k$ is well defined. By the definitions of $n_k$ and $m_k$,  for all $k\geq1$, we have $$\beta^{-(m_k-n_k)-1}\leq |T_\beta^{n_k} x-x|< \beta^{-(m_k-n_k)}.$$

Now we choose two subsequences $\{n_{i_k}\}_{k=1}^\infty$ and $\{m_{i_k}\}_{k=1}^\infty$  of $\{n_k\}_{k=1}^\infty$ and $\{m_k\}_{k=1}^\infty$ such that $\{m_{i_k}-n_{i_k}\}_{k=1}^\infty$ is not decreasing. Let $i_1=1$. Assume that $i_k$ has been defined. Let $$i_{k+1}=\min\{i>i_k:m_i-n_i>m_{i_k}-n_{i_k}\}.$$Since $r_\beta(x)>0$, it follows that $m_k-n_k$ goes to infinity as $k\rightarrow+\infty$. So $i_{k+1}$ is well defined. Then we have the sequence $\{m_{i_k}-n_{i_k}\}_{k=1}^\infty$ is not decreasing. Without causing any confusion, we still use the same symbols $\{n_k\}_{k=1}^\infty$ and $\{m_k\}_{k=1}^\infty$ to substitute the subsequences $\{n_{i_k}\}_{k=1}^\infty$ and $\{m_{i_k}\}_{k=1}^\infty$.   We claim that
$$r_\beta(x)=\limsup_{k\rightarrow \infty}\frac{m_{k}-n_{k}}{n_{k}}\quad {\rm and}\quad
\hat{r}_\beta(x)=\liminf_{k\rightarrow \infty}\frac{m_{k}-n_{k}}{n_{{k}+1}}.$$
In fact, assume $$\limsup\limits_{k\rightarrow \infty}\frac{m_{k}-n_{k}}{n_{k}}=c.$$ On the one hand, there is a subsequence $\{j_k\}_{k=1}^\infty$ such that$$\lim_{k\rightarrow \infty}\frac{m_{j_k}-n_{j_k}}{n_{j_k}}=c.$$ This implies that for all $\delta>0$, there is an integer $k_0$, for each $k\geq k_0$, $m_{j_k}-n_{j_k}\geq (c-\delta)n_{j_k}$. So $$|T_\beta^{n_{j_k}}x-x|< \beta^{n_{j_k}-m_{j_k}}\leq \beta^{-(c-\delta)n_{j_k}}.$$ Consequently, $r_\beta(x)\geq c-\delta$ for all $\delta\geq 0$. On the other hand, there is an integer $k_0$, for any $k\geq k_0$, we have $m_{k}-n_{k}\leq (c+\delta)n_k$. So for all $n\geq n_{k_0}$, there is an integer $k$ such that $n_k\leq n<n_{k+1}$. This means $$|T_\beta^nx-x|\geq\beta^{-(m_k-n_k)-1}\geq \beta^{-(c+\delta)n_k}.$$ Hence, $r_\beta(x)<c+\delta$ for any $\delta>0$. Immediately, $r_\beta(x)= c$. The same argument can deduce the equality $(\ref{inf})$, we leave it to the readers.
\end{proof}

\begin{lemma}
For all $x\in [0,1)$ whose $\beta$-expansion is not periodic and $r_\beta(x)>0$, let $\{m_k\}_{k=1}^\infty$ and $\{n_k\}_{k=1}^\infty$ be defined in Lemma \ref{mn}. Then there is a sequence $\{t_k\}_{k=1}^\infty$ such that when $t_k\geq m_k$, we have
\begin{equation}\label{e4}
\left(\varepsilon_1,\ldots,\varepsilon_{m_{k}}\right)=\left(\varepsilon_1,\ldots,\varepsilon_{n_{k}},\varepsilon_1,\ldots, \varepsilon_{m_{k}-n_{k}}\right).
\end{equation}When $t_k<m_k$, we have \begin{equation}\label{1}
(\varepsilon_1,\ldots,\varepsilon_{m_k})=\left(\varepsilon_1,\ldots,\varepsilon_{n_k},\varepsilon_1,\ldots,\varepsilon_{t_k-n_k}, \varepsilon_{t_k-n_k+1}-1, \varepsilon_1^\ast(\beta), \ldots, \varepsilon_{m_k-t_k-1}^\ast(\beta)\right).
\end{equation} or \begin{equation}\label{2}
(\varepsilon_1,\ldots,\varepsilon_{m_k})=(\varepsilon_1,\ldots,\varepsilon_{n_k},\varepsilon_1,\ldots,\varepsilon_{t_k},\varepsilon_{t_k-n_k+1}+1, 0^{m_k-n_k-t_k-1}).
\end{equation}
\end{lemma}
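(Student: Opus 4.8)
The plan is to read off the digit structure of $x$ directly from the two-sided estimate $\beta^{-(m_k-n_k)-1}\le|T_\beta^{n_k}x-x|<\beta^{-(m_k-n_k)}$ obtained in Lemma \ref{mn}, by comparing the $\beta$-expansion $(\varepsilon_1,\varepsilon_2,\ldots)$ of $x$ with the expansion $(\varepsilon_{n_k+1},\varepsilon_{n_k+2},\ldots)$ of $T_\beta^{n_k}x$, which is simply its shift. First I would let $p_k$ be the length of the longest common prefix of these two sequences and put $t_k=n_k+p_k$. Since the $\beta$-expansion of $x$ is not periodic, $x\neq T_\beta^{n_k}x$, so $p_k$ is finite and $t_k$ is well defined.

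The first step is to locate $p_k$ relative to $m_k-n_k$. As $x$ and $T_\beta^{n_k}x$ lie in one common cylinder of order $p_k$, we have $|T_\beta^{n_k}x-x|<|I_{p_k}|\le\beta^{-p_k}$; if $p_k\ge m_k-n_k+1$ this would give $|T_\beta^{n_k}x-x|<\beta^{-(m_k-n_k)-1}$, contradicting the lower estimate. Hence $p_k\le m_k-n_k$, i.e. $t_k\le m_k$. In the boundary case $t_k=m_k$ the first $m_k-n_k$ digits of $x$ and of $T_\beta^{n_k}x$ coincide, which is precisely (\ref{e4}); this covers the regime ``$t_k\ge m_k$''.

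Suppose now $p_k<m_k-n_k$, so that $t_k<m_k$ and the first disagreement occurs at position $p_k+1\le m_k-n_k$. Writing $x-T_\beta^{n_k}x$ as $(\varepsilon_{p_k+1}-\varepsilon_{n_k+p_k+1})\beta^{-(p_k+1)}$ plus a remainder of modulus at most $\beta^{-(p_k+1)}$, the inequality $|T_\beta^{n_k}x-x|<\beta^{-(m_k-n_k)}\le\beta^{-(p_k+1)}$ forces the two leading digits to differ by exactly one: a gap of two or more would already make $|T_\beta^{n_k}x-x|\ge\beta^{-(p_k+1)}$. I would then split according to the sign of $x-T_\beta^{n_k}x$. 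If $T_\beta^{n_k}x<x$, then for the distance to stay below $\beta^{-(m_k-n_k)}$ the smaller point $T_\beta^{n_k}x$ must be as large as its leading digit permits; expressing the distance as the leading gap $\beta^{-(p_k+1)}$ minus a non-negative deficit, the upper bound forces the tail of $T_\beta^{n_k}x$ beyond position $p_k+1$ to agree with the maximal sequence $\varepsilon^\ast(\beta)$ up to position $m_k-n_k$. Reading this back through $\varepsilon_{t_k+1+i}=\varepsilon_{n_k+p_k+1+i}$ gives (\ref{1}), and the symmetric case $T_\beta^{n_k}x>x$, where instead the tail of the larger point is annihilated, gives (\ref{2}).

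The hard part will be the faithful passage from the metric inequality to the combinatorial statement, because cylinders need not be full: one must show that being within $\beta^{-(m_k-n_k)}$ of the extremal value forces the digit string of $T_\beta^{n_k}x$ to equal $\varepsilon^\ast(\beta)$ (respectively to vanish) for exactly $m_k-t_k-1$ further positions. Here Parry's admissibility criterion (Theorem \ref{P}), which controls how close the tail of an expansion can approach the top of its cylinder, supplies the upper control, while the lower bound $|T_\beta^{n_k}x-x|\ge\beta^{-(m_k-n_k)-1}$ together with the maximality in the definition of $m_k$ prevents the extremal pattern from persisting beyond position $m_k$. Once these two bounds are in place, the matching of lengths and indices in (\ref{e4}), (\ref{1}) and (\ref{2}) is routine bookkeeping.
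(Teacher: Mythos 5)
Your proposal is, in structure, the same proof as the paper's. Your $t_k=n_k+p_k$, with $p_k$ the longest common prefix of $\varepsilon(x,\beta)$ and its shift by $n_k$, is exactly the paper's definition (\ref{tk}); your cylinder estimate forcing $p_k\le m_k-n_k$ is the paper's first step (the paper settles for the weaker $t_k\le m_k+1$, which is why its statement must allow $t_k\ge m_k$ rather than $t_k=m_k$); and your argument that the first disagreeing digits differ by exactly one is correct, and in fact cleaner than the paper's: you use only that the two tails beyond the disagreement lie in $[0,\beta^{-(p_k+1)})$, where the paper invokes a separating full cylinder via Theorem \ref{AB}. The half of the dichotomy with $T_\beta^{n_k}x>x$ is also sound, because the set of points of $[0,1)$ whose first $M$ digits vanish is exactly the interval $[0,\beta^{-M})$, so the upper bound you derive on $T_\beta^{t_k+1}x$ genuinely annihilates the digits and yields (\ref{2}).

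The genuine gap is the step you yourself label ``the hard part'', and the tool you offer cannot close it. For (\ref{1}) you need an implication of the form: if $v\in[0,1)$ and $v>1-\beta^{-L}$, then the first $L$ digits of $v$ are $\varepsilon_1^\ast(\beta),\ldots,\varepsilon_L^\ast(\beta)$. This is false precisely because cylinders need not be full. Let $\beta>1$ be the root of $\beta^3=\beta^2+1$, so that $\varepsilon^\ast(\beta)=(1,0,0)^\infty$; then every $v$ in the nonempty interval $(1-\beta^{-1},\beta^{-1})$ is within $\beta^{-1}$ of $1$, yet its first digit is $0\neq 1=\varepsilon_1^\ast(\beta)$. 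Parry's criterion (Theorem \ref{P}) converts your numerical lower bound only into a \emph{lexicographic} lower bound on the digit string of $T_\beta^{n_k}x$, which is strictly weaker than the digit identity that (\ref{1}) asserts. You should be aware that the paper's own proof commits exactly the same leap: it derives a numerical lower bound for $T_\beta^{n_k}x$ and then simply reads off the digits. The defect is not cosmetic: for the above $\beta$, take any $x$ whose expansion begins $(0,1,0,0,0,1,0,0,1,0,0)$ and is completed admissibly, non-periodically, with $r_\beta(x)>0$ (for instance by repeating ever longer prefixes separated by ever longer blocks of zeros). One checks that $n_1=2$, $t_1=3$, $m_1=5$, and $\varepsilon_2=\varepsilon_4+1$, so one is in the situation of (\ref{1}); but $\varepsilon_5=0\neq\varepsilon_1^\ast(\beta)$, so none of (\ref{e4}), (\ref{1}), (\ref{2}) holds at $k=1$. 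In short: everything you establish before the final dichotomy is right (and partly tidier than the paper's treatment), but the passage to (\ref{1}) cannot be completed by the route you sketch --- nor by the paper's; what survives there, for both of you, is only a lexicographic lower bound on $(\varepsilon_{t_k+2},\ldots,\varepsilon_{m_k})$.
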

\begin{proof}
Let
\begin{equation}\label{tk}
t_k=\max\{n>n_k:(\varepsilon_{n_k+1}\ldots,\varepsilon_n) =(\varepsilon_1,\ldots,\varepsilon_{n-n_k})\}.
\end{equation} That is, the position $t_k$ is chosen to make sure that $(\varepsilon_{n_k+1},\ldots,\varepsilon_{t_k})$ is the maximal block after position $n_k$ which returns to  $(\varepsilon_1,\ldots,\varepsilon_{t_k-n_k})$. Since $\varepsilon(x,\beta)$ is not periodic, $t_k$ is well defined. By the definition of $t_k$, it holds that
\begin{equation}\label{e1}
\left(\varepsilon_1,\ldots,\varepsilon_{t_{k}}\right)=\left(\varepsilon_1,\ldots,\varepsilon_{n_{k}},\varepsilon_1,\ldots, \varepsilon_{t_{k}-n_{k}}\right),\ \varepsilon_{t_{k}+1}\neq \varepsilon_{t_{k}-n_{k}+1}.
\end{equation}
Then $T_\beta^{n_k}x$ and $x$ belong to the interval $I_{t_k-n_k}(\varepsilon_1,\ldots,\varepsilon_{t_k-n_k})$ which implies $$\beta^{-(m_k-n_k)-1}\leq |T_\beta^{n_k} x-x|\leq |I_{t_k-n_k}(\varepsilon_1,\ldots,\varepsilon_{t_k-n_k})|\leq \beta^{-(t_k-n_k)}.$$ Thus, $t_k\leq m_k+1$.

When $t_k\geq m_k$, it follows from (\ref{e1}) that
$$\left(\varepsilon_1,\ldots,\varepsilon_{m_{k}}\right)=\left(\varepsilon_1,\ldots,\varepsilon_{n_{k}},\varepsilon_1,\ldots, \varepsilon_{m_{k}-n_{k}}\right).$$

When $t_k<m_k$ and $|\varepsilon_{t_{k}+1}- \varepsilon_{t_{k}-n_{k}+1}|\geq 2,$  by (\ref{e1}), there is a full cylinder of order $t_k-n_k+1$ between $x$ and $T_\beta^{n_k}$. As a consequence, $$|T_\beta^{n_k}x-x|\geq \beta^{-(t_k-n_k+1)}\geq \beta^{-(m_k-n_k)},$$ which contradicts with the definition of $m_k$. Hence, $|\varepsilon_{t_{k}+1}- \varepsilon_{t_{k}-n_{k}+1}|=1$.

When $\varepsilon_{t_{k}-n_{k}+1}=\varepsilon_{t_{k}+1}+1$. By the definition of $t_k$, we have $$x=\frac{\varepsilon_1}{\beta}+\cdots+\frac{\varepsilon_{t_k-n_k}}{\beta^{t_k-n_k}}+\frac{\varepsilon_{t_k-n_k+1}}{\beta^{t_k-n_k+1}}+\cdots\geq \frac{\varepsilon_1}{\beta}+\cdots+\frac{\varepsilon_{t_k-n_k}}{\beta^{t_k-n_k}}+\frac{\varepsilon_{t_k+1}+1}{\beta^{t_k-n_k+1}}. $$ Noting that $|T^{n_k}_\beta x-x|=x-T^{n_k}_\beta x< \beta^{-(m_k-n_k)}$, we have
$$T^{n_k}_\beta x> x-\beta^{-(m_k-n_k)}\geq\frac{\varepsilon_1}{\beta} +\cdots+\frac{\varepsilon_{t_k}}{\beta^{t_k-n_k}}+\frac{\varepsilon_{t_k+1}+1}{\beta^{t_k-n_k+1}}-\frac{1}{\beta^{m_k-n_k}}$$
Note that $$\frac{1}{\beta^{t_k-n_k+1}} -\frac{1}{\beta^{m_k-n_k}}=\frac{\varepsilon_{t_k+1}}{\beta^{t_k-n_k+1}}+\frac{\varepsilon_1^\ast(\beta)}{\beta^{t_k-n_k+2}} +\cdots+\frac{\varepsilon_{m_k-n_k-t_k-1}^\ast(\beta)-1}{\beta^{m_k-n_k}}+\cdots.$$ Thus,$$T^{n_k}_\beta x> \frac{\varepsilon_1}{\beta} +\cdots+\frac{\varepsilon_{t_k-n_k}}{\beta^{t_k-n_k}}+\frac{\varepsilon_{t_k+1}}{\beta^{t_k-n_k+1}}+\frac{\varepsilon_1^\ast(\beta)}{\beta^{t_k-n_k+2}} +\cdots+\frac{\varepsilon_{m_k-n_k-t_k-1}^\ast(\beta)-1}{\beta^{m_k-n_k}}+\cdots.$$Then,
$$(\varepsilon_1,\ldots,\varepsilon_{m_k})=\left(\varepsilon_1,\ldots,\varepsilon_{n_k},\varepsilon_1,\ldots,\varepsilon_{t_k-n_k}, \varepsilon_{t_k-n_k+1}-1, \varepsilon_1^\ast(\beta), \ldots, \varepsilon_{m_k-t_k-1}^\ast(\beta)\right).$$

When $\varepsilon_{t_k+1}=\varepsilon_{t_k-n_k+1}+1$,  we have $$x=\frac{\varepsilon_1}{\beta}+\cdots+\frac{\varepsilon_{t_k-n_k}}{\beta^{t_k-n_k}}+\frac{\varepsilon_{t_k-n_k+1}}{\beta^{t_k-n_k+1}}+\cdots\leq \frac{\varepsilon_1}{\beta}+\cdots+\frac{\varepsilon_{t_k-n_k}}{\beta^{t_k-n_k}}+\frac{\varepsilon_{t_k+1}}{\beta^{t_k-n_k+1}}. $$ Noticing that $|T^{n_k}_\beta x-x|=T^{n_k}_\beta x-x< \beta^{-(m_k-n_k)}$, we have
$$ T^{n_k}_\beta < x+\beta^{-(m_k-n_k)} \leq\frac{\varepsilon_1}{\beta} +\cdots+\frac{\varepsilon_{t_k-n_k}}{\beta^{t_k-n_k}}+\frac{\varepsilon_{t_k+1}}{\beta^{t_k-n_k+1}}+\frac{1}{\beta^{m_k-n_k}}.$$ This implies
$$(\varepsilon_1,\ldots,\varepsilon_{m_k})=(\varepsilon_1,\ldots,\varepsilon_{n_k},\varepsilon_1,\ldots,\varepsilon_{t_k},\varepsilon_{t_k-n_k+1}+1, 0^{m_k-n_k-t_k-1}).$$
\end{proof}
\begin{remark}
The two sequences $\{n_k\}_{k=1}^\infty$ and $\{m_k\}_{k=1}^\infty$ chosen here are exactly the same sequences in Bugeaud and Liao \cite{BL}. However, the property that $n_{k+1}>m_k=t_k>n_k$ which always holds in  \cite{BL} fails in our case. Then uncertainty of the relationship between $m_k$, $t_k$ and $n_{k+1}$ makes the construction of $\varepsilon(x,\beta)$ of all $x\in R_\beta(r,\hat{r})$ be much more complicated.
%Meanwhile, the estimation of the upper bound of $\dim_{\rm H}R_\beta(r,\hat{r})$  for the case $0\leq \hat{r}\leq \frac{r}{1+r},\ 0<r\leq +\infty$ also becomes much more difficult.
\end{remark}

Now we will investigate the relationship between $m_k$ and $n_k$ for the case $0\leq \frac{r}{1+r}<\hat{r}\leq +\infty$.
\begin{lemma}\label{com}
For each $x\in R_\beta(\hat{r},r)$ with $0\leq \frac{r}{1+r}<\hat{r}\leq +\infty$, let $m_k$ and $n_k$ be defined in Lemma \ref{mn}. Then  there is an integer $k'$ such that $n_{k+1}< m_k$ for all $k\geq k'$.
\end{lemma}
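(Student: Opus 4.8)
The plan is to argue by contradiction, after translating everything into the single quantity $d_k := m_k - n_k$. By Lemma~\ref{mn} we have
$$r = r_\beta(x) = \limsup_{k\to\infty}\frac{d_k}{n_k}, \qquad \hat r = \hat r_\beta(x) = \liminf_{k\to\infty}\frac{d_k}{n_{k+1}},$$
and the desired conclusion $n_{k+1} < m_k$ for all large $k$ is literally the statement $n_{k+1} < n_k + d_k$ for all large $k$. First I would note that the hypothesis $\frac{r}{1+r} < \hat r$ forces $r > 0$ (otherwise $\hat r \le r = 0$ would contradict $\hat r > 0$), so that Lemma~\ref{mn} genuinely applies to $x$. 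Then I would suppose, for contradiction, that $n_{k+1} \ge m_k = n_k + d_k$ holds for infinitely many $k$, and let $S \subseteq \N$ denote the (infinite) set of such indices.

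For every $k \in S$ the failure inequality gives directly
$$\frac{d_k}{n_{k+1}} \le \frac{d_k}{n_k + d_k} = \frac{d_k/n_k}{1 + d_k/n_k}.$$
Since $t \mapsto t/(1+t)$ is continuous and increasing on $[0,\infty)$ and $\limsup_{k\in S} d_k/n_k \le \limsup_{k} d_k/n_k = r$, taking the limit superior along $S$ yields $\limsup_{k\in S}\frac{d_k}{n_{k+1}} \le \frac{r}{1+r}$. On the other hand, the liminf of a subsequence is at least the liminf of the full sequence, so
$$\hat r = \liminf_{k}\frac{d_k}{n_{k+1}} \le \liminf_{k\in S}\frac{d_k}{n_{k+1}} \le \limsup_{k\in S}\frac{d_k}{n_{k+1}} \le \frac{r}{1+r}.$$
This contradicts $\hat r > \frac{r}{1+r}$. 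Hence $n_{k+1} \ge m_k$ can occur only finitely often, which is exactly the claim that some $k'$ exists with $n_{k+1} < m_k$ for all $k \ge k'$.

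The only place demanding care is the treatment of $r = +\infty$ (the hypothesis then reads $\hat r > 1$, since $\frac{r}{1+r}$ is to be read as its supremal value $1$). There the bound $\limsup_{k\in S} d_k/n_k \le r$ carries no information, but $t/(1+t) < 1$ for all finite $t$ with $t/(1+t)\to 1$, so $\limsup_{k\in S}\frac{d_k}{n_{k+1}} \le 1$ still holds and the contradiction $\hat r \le 1$ persists. I expect the main (rather mild) obstacle to be precisely this bookkeeping with infinite values, together with keeping the liminf/limsup inequalities pointing the correct way when restricting to the subsequence $S$; once the monotonicity of $t \mapsto t/(1+t)$ is invoked, the estimate is immediate.
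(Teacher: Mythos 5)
Your proof is correct and rests on the same key comparison as the paper's: both exploit that $\limsup_k \frac{m_k-n_k}{m_k} = \frac{r}{1+r}$ (via monotonicity of $t\mapsto t/(1+t)$) against $\liminf_k\frac{m_k-n_k}{n_{k+1}} = \hat{r} > \frac{r}{1+r}$. The only differences are presentational: you argue by contradiction along the subsequence where $n_{k+1}\ge m_k$, whereas the paper argues directly with an explicit $\varepsilon$ and obtains the marginally stronger conclusion $n_{k+1}\le(1-\varepsilon)m_k$; your explicit bookkeeping for the $r=+\infty$ case and the observation that the hypothesis forces $r>0$ are details the paper leaves implicit.
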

\begin{proof}
Since $\hat{r}>\frac{r}{1+r}$, by (\ref{sup}) and (\ref{inf}), there is a positive number $\varepsilon>0$ and an integer $k''$, such that for any $k\geq k''$, we have $$\frac{m_{k}-n_{k}}{n_{{k}+1}}\geq \left(\frac{r}{1+r}+\varepsilon\right)\frac{1}{1-\varepsilon}.$$
By (\ref{sup}), we obtain $$\frac{r}{1+r}=1-\frac{1}{1+r}=1-\frac{1}{1+\limsup\limits_{k\rightarrow \infty}\frac{m_{k}-n_{k}}{n_{k}}}=\limsup_{k\rightarrow \infty}\frac{m_{k}-n_{k}}{m_{k}}.$$ Consequently, there is an integer $k'\geq k''$, such that for all $k\geq k'$, $$\frac{m_{k}-n_{k}}{n_{{k}+1}}\geq \left(\frac{r}{1+r}+\varepsilon\right)\frac{1}{1-\varepsilon}\geq\frac{1}{1-\varepsilon}\cdot \frac{m_{k}-n_{k}}{m_{k}},$$ which implies $n_{k+1}\leq(1-\varepsilon)m_{k}.$
\end{proof}

The following lemma gives the sequences we will use to construct the covering of $R_\beta(r,\hat{r})$ when proving the upper bound of $\dim_{\rm H}R_\beta(r,\hat{r})$  for the case $0\leq \hat{r}\leq \frac{r}{1+r},\ 0<r\leq +\infty$.
\begin{lemma}\label{rep}
For all $x\in R_\beta(\hat{r},r)$ with $0\leq \hat{r}\leq 1,\ \frac{\hat{r}}{1-\hat{r}}\leq r<+\infty$, there are two sequences $\{n_k'\}_{k=1}^\infty$ and $\{m_k'\}_{k=1}^\infty$ such that for any large enough $k$, there is a positive real number $C$ satisfying
$$n'_{k+1}\geq (1-\delta)m'_k,\quad n'_{k+1}\geq \frac{2+\hat{r}_\beta(x)}{\hat{r}_\beta(x)}n'_k\quad {\rm and}\quad k\leq C\log n'_k.$$
\end{lemma}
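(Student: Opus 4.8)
The plan is to realize $\{n_k'\}$ and $\{m_k'\}$ as a thinned subsequence of the pair $\{n_k\},\{m_k\}$ produced by Lemma~\ref{mn}, for which $\limsup_{k\to\infty}\frac{m_k-n_k}{n_k}=r$ and $\liminf_{k\to\infty}\frac{m_k-n_k}{n_{k+1}}=\hat r$ by (\ref{sup}) and (\ref{inf}), with $\{m_k-n_k\}$ non-decreasing. First I would rewrite the hypothesis $\frac{\hat r}{1-\hat r}\le r$ in the equivalent form $\hat r\le\frac{r}{1+r}$ and, exactly as in the proof of Lemma~\ref{com}, record the companion identity $\frac{r}{1+r}=\limsup_{k\to\infty}\frac{m_k-n_k}{m_k}$. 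Thus for every small $\epsilon>0$ there are infinitely many indices $k$ whose return block is proportionally long, $m_k-n_k\ge(r-\epsilon)n_k$, equivalently $n_k\le\frac{1}{1+r-\epsilon}m_k$; call these the good indices, and note that $n_k\to\infty$ along them.

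Next I would build $\phi(1)<\phi(2)<\cdots$ by induction: given $\phi(k)$, choose the next good index $\phi(k+1)$ so large that both $n_{\phi(k+1)}\ge(1-\delta)m_{\phi(k)}$ and $n_{\phi(k+1)}\ge\frac{2+\hat r}{\hat r}n_{\phi(k)}$, which is possible since the good indices are infinite and $n_k\to\infty$. Setting $n_k'=n_{\phi(k)}$ and $m_k'=m_{\phi(k)}$, the first two displayed inequalities then hold by construction (note $\frac{2+\hat r}{\hat r}>1$, and its finiteness forces the standing requirement $\hat r>0$), while goodness pins $m_k'$ to $(1+r)n_k'$ up to $\epsilon$. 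Finally, iterating $n_{k+1}'\ge\frac{2+\hat r}{\hat r}n_k'$ gives $n_k'\ge\left(\frac{2+\hat r}{\hat r}\right)^{k-1}n_1'$, hence $\log n_k'\ge(k-1)\log\frac{2+\hat r}{\hat r}$ and the third inequality $k\le C\log n_k'$ holds for all large $k$ with, say, $C=2/\log\frac{2+\hat r}{\hat r}$.

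The hard part is not the bare verification above---any sufficiently sparse subsequence already meets the three stated lower bounds---but keeping the subsequence tight enough that it yields the exact dimension in the covering to follow, i.e.\ simultaneously (a) holding $\frac{n_k'}{m_k'}$ near $\frac{1}{1+r}$ by staying on good indices and (b) honoring the two spacing bounds. Since the $\limsup$ defining $r$ and the $\liminf$ defining $\hat r$ are a priori attained along different index sets, the crux is to transport the $\liminf$ estimate $n_{\phi(k)+1}\ge\frac{m_{\phi(k)}-n_{\phi(k)}}{\hat r+\epsilon}$ onto the good subsequence; here I would use the monotonicity of $m_k-n_k$ together with $n_{\phi(k)+1}\le n_{\phi(k+1)}$ to pass the bound forward. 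It is precisely the regime inequality $\hat r\le\frac{r}{1+r}$, together with $\hat r\le1$, that makes the resulting requirement $\frac{n_k'}{m_k'}\le 1-(1-\delta)(\hat r+\epsilon)$ compatible with $\frac{n_k'}{m_k'}\approx\frac{1}{1+r}$, so that (a) and (b) do not conflict; reconciling these competing demands, rather than any single estimate, is the main obstacle.
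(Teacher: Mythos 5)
Your construction secures the three displayed inequalities only by unbounded thinning (``choose the next good index so large that\dots''), and that destroys the property which makes this lemma usable. The sequences produced here are consumed by Lemma~\ref{pre}, whose proof requires (\ref{inf1}), i.e.\ $m'_k-n'_k\geq(\hat r_\beta(x)-\epsilon)n'_{k+1}$ for all large $k$ (inequality (\ref{i2})). This is an \emph{upper} bound on the next chosen index, $n'_{k+1}\leq\frac{m'_k-n'_k}{\hat r_\beta(x)-\epsilon}$, so consecutive chosen indices must stay within a bounded ratio (about $\frac{r}{\hat r}$ along your good indices). Nothing in your induction bounds $n_{\phi(k+1)}$ from above, and the good indices where $m_k-n_k\geq(r-\epsilon)n_k$ may be arbitrarily sparse, so (\ref{inf1}) fails in general for your subsequence. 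You correctly identify this as the crux, but your transport mechanism goes the wrong way: the estimate you quote, $n_{\phi(k)+1}\geq\frac{m_{\phi(k)}-n_{\phi(k)}}{\hat r+\epsilon}$, is a \emph{lower} bound, and the inequality $n_{\phi(k)+1}\leq n_{\phi(k+1)}$ can only propagate lower bounds forward; it can never yield the needed upper bound on $n_{\phi(k+1)}$ (monotonicity of $m_k-n_k$ likewise gives lower bounds on later block lengths, not upper bounds on later positions). Worse, your constraint set is infeasible as stated: on good indices $m'_k-n'_k\leq(r+\epsilon)n'_k$, so tightness forces $n'_{k+1}\lesssim\frac{r}{\hat r}n'_k$, while you simultaneously demand $n'_{k+1}\geq\frac{2+\hat r}{\hat r}n'_k$; these clash whenever $r<2+\hat r$, e.g.\ $r=1$, $\hat r=\frac12$, which lies in the admissible regime. (The paper's own claim (\ref{seq}) proves only the factor $\frac{2+\hat r}{2}$, which is the consistent one; the factor $\frac{2+\hat r}{\hat r}$ in the statement is a slip, and taking it at face value and satisfying it by sparseness is precisely what cannot be done.)

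The missing idea is an \emph{existence} argument for indices satisfying the spacing relations without leaving the recurrence structure, and the paper gets it from consecutive pairs of the original Lemma~\ref{mn} sequence rather than from skipping ahead. Writing the regime hypothesis as $\hat r_\beta(x)\leq\frac{r_\beta(x)}{1+r_\beta(x)}=\limsup_k\frac{m_k-n_k}{m_k}$ (as you also do), the paper deduces $\liminf_k\frac{(m_k-n_k)(m_k-n_{k+1})}{n_{k+1}m_k}\leq0$ and splits into two cases: if the liminf is negative, then $n_{k+1}>m_k$ for infinitely many $k$; if it is zero, then for suitably small $\delta_2$ one gets $m_k-n_{k+1}\leq\frac{\delta_2}{\hat r_\beta(x)-\delta_1}\,m_k$ for infinitely many $k$. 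Either way, infinitely many $k$ satisfy $n_{k+1}\geq(1-\delta)m_k$ with $n_{k+1}$ the \emph{very next} return index, and the growth bound then comes from (\ref{in1}), $m_k\geq(1+\hat r_\beta(x)-\delta_1)n_k$, which holds for all large $k$ by (\ref{inf}). Selecting those indices keeps the subsequence pinned to the orbit's return structure, which is what Lemma~\ref{pre} and the covering in Section~3.3.1 actually use; your proposal proves a literally true but strictly weaker statement that cannot support that sequel.
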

\begin{proof}
For all $x\in R_\beta(\hat{r},r)\left(0\leq \hat{r}\leq 1,\ \frac{\hat{r}}{1-\hat{r}}\leq r<+\infty\right)$, the $\beta$-expansion of $x$ is not periodic and $r_\beta(x)>0$. Let $\{n_k\}_{k=1}^\infty$ and $\{m_k\}_{k=1}^\infty$ be the sequences defined in Lemma \ref{mn}.

By (\ref{inf}), for any $0<\delta_1<\frac{\hat{r}_\beta(x)}{2}$, for sufficiently large $k$, we have
\begin{equation}\label{in1}
m_k-n_k\geq (\hat{r}_\beta(x)-\delta_1)n_{k+1},\ {\rm that\ is,\ } m_k\geq n_{k+1}+(\hat{r}_\beta(x)-\delta_1)n_k\geq(1+\hat{r}_\beta(x)-\delta_1)n_k.
\end{equation}Note that $\hat{r}_\beta(x) \leq \frac{r_\beta(x)}{1+r_\beta(x)}$. It follows from (\ref{sup}) and (\ref{inf}) that $$\liminf_{k\rightarrow \infty}\frac{m_k-n_k}{n_{k+1}}\leq \limsup_{k\rightarrow \infty}\frac{m_k-n_k}{m_k}.$$
Consequently, $$\liminf_{k\rightarrow \infty}\frac{m_k-n_k}{n_{k+1}}- \limsup_{k\rightarrow \infty}\frac{m_k-n_k}{m_k}=\liminf_{k\rightarrow \infty} \left(\frac{m_k-n_k}{n_{k+1}}-\frac{m_k-n_k}{m_k}\right)\leq 0.$$

We claim that, for all $\delta>0$, there are infinitely many $k$, such that
\begin{equation}\label{seq}
n_{k+1}\geq (1-\delta)m_k\quad {\rm and}\quad n_{k+1}\geq \left(1+\frac{\hat{r}_\beta(x)}{2}\right)n_k=\frac{2+\hat{r}_\beta(x)}{2}n_k.
\end{equation}

In fact, notice that $m_k-n_k>0,\ n_{k+1}m_k>0$ for all $k\geq 1$. When $$\liminf_{k\rightarrow \infty}\left(\frac{m_k-n_k}{n_{k+1}}-\frac{m_k-n_k}{m_k}\right)=\liminf_{k\rightarrow \infty}\frac{(m_k-n_k)(m_k-n_{k+1})}{n_{k+1}m_k}<0,$$  by (\ref{in1}), we have $$n_{k+1}>m_k\geq(1+\hat{r}_\beta(x)-\delta_1)n_k\geq \left(1+\frac{\hat{r}_\beta(x)}{2}\right)n_k.$$ When $$\liminf_{k\rightarrow \infty}\left(\frac{m_k-n_k}{n_{k+1}}-\frac{m_k-n_k}{m_k}\right)=\liminf_{k\rightarrow \infty}\frac{(m_k-n_k)(m_k-n_{k+1})}{n_{k+1}m_k}=0,$$ then for all $0<\delta_2\leq\frac{(\hat{r}_\beta(x)-\delta_1)(\hat{r}_\beta(x)-2\delta_1)}{2(1+\hat{r}_\beta(x)-\delta_1)},$ there are infinitely many $k\geq k_0$ such that $$m_k-n_{k+1}\leq \delta_2\cdot\frac{n_{k+1}m_k}{m_k-n_k}\leq \delta_2\cdot \frac{n_{k+1}m_k}{(\hat{r}_\beta(x)-\delta_1)n_{k+1}} =\frac{\delta_2}{\hat{r}_\beta(x)-\delta_1}\cdot m_k,$$where the last inequality follows from the first part of (\ref{in1}).

Therefore, by (\ref{in1}) and the choice of $\delta_2$,
$$n_{k+1}\geq\left(1-\frac{\delta_2}{\hat{r}_\beta(x)-\delta_1}\right)m_k\geq \left(1-\frac{\delta_2}{\hat{r}_\beta(x)-\delta_1}\right)(1+\hat{r}_\beta(x)-\delta_1)n_k\geq \left(1+\frac{\hat{r}_\beta(x)}{2}\right)n_k.$$Letting $\delta=\frac{\delta_2}{\hat{r}_\beta(x)-\delta_1}$, we obtain the claim.

Now we choose the subsequence $\{n_{j_k}\}_{k=1}^\infty$ and $\{m_{j_k}\}_{k=1}^\infty$ of $\{n_{k}\}_{k=1}^\infty$ and $\{m_{k}\}_{k=1}^\infty$ satisfying (\ref{seq}). For simplicity, let $n'_k=n_{j_k}$ and $m'_k=m_{j_k}$. Then $\{n'_k\}_{k=1}^\infty$ increases at least exponentially since $$n'_{k+1}\geq \frac{2+\hat{r}_\beta(x)}{\hat{r}_\beta(x)}n'_k.$$ In conclusion, the new sequences satisfy that,  there is a large enough $k_0$ and a positive real number $C$ such that for all $k\geq k_0$, we have $k\leq C\log n'_k.$
\end{proof}

We now give an estimation of the numbers of the sum of all the lengths of the blocks which are ``fixed'' in the prefix of length $m_k$ ($m_k$ is defined in Lemma \ref{rep}) of the infinite sequence $\varepsilon(x,\beta)$ where  $x\in R_\beta(\hat{r},r)\ \left(0\leq \hat{r}\leq 1,\ \frac{\hat{r}}{1-\hat{r}}\leq r<+\infty\right)$ for all sufficiently large $k$.
\begin{lemma}\label{pre}
For all $x\in R_\beta(\hat{r},r)$ with $0\leq \hat{r}\leq 1,\ \frac{\hat{r}}{1-\hat{r}}\leq r<+\infty$, let $\{m'_k\}_{k=1}^\infty$ and $\{n'_k\}_{k=1}^\infty$ be the sequences defined in Lemma \ref{rep}. Then for all large enough integer $k$, we have
\begin{equation}\label{i3}
\sum_{i=1}^k (m'_i-n'_i)\geq n'_{k+1}\left(\frac{\hat{r}_\beta(x)r_\beta(x)}{r_\beta(x)-\hat{r}_\beta(x)}-\epsilon'\right)
\end{equation}
\end{lemma}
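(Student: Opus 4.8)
The plan is to reduce the estimate to a convergent geometric series whose ratio is governed by the interplay of the two exponents. Throughout write $r=r_\beta(x)$ and $\hat r=\hat r_\beta(x)$, and recall from Lemma \ref{mn} that the gaps $m_k-n_k$ were arranged to be non-decreasing, that $r=\limsup_k\frac{m_k-n_k}{n_k}$, and that $\hat r=\liminf_k\frac{m_k-n_k}{n_{k+1}}$. Fix a small $\epsilon>0$. The definition of the $\limsup$ gives, for every large index $l$, the upper bound $m_l-n_l\le(r+\epsilon)n_l$, while the definition of the $\liminf$ gives the lower bound $m_l-n_l\ge(\hat r-\epsilon)n_{l+1}$. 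Chaining these two inequalities yields the crucial growth control
\[
n_{l+1}\le\frac{r+\epsilon}{\hat r-\epsilon}\,n_l .
\]
Since the hypothesis forces $\hat r\le\frac{r}{1+r}<r$, the ratio $\frac{\hat r-\epsilon}{r+\epsilon}$ is strictly less than $1$, which is exactly what will make the series below converge.

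The two facts I actually want on the extracted sequence $\{n_k'\},\{m_k'\}$ of Lemma \ref{rep} are a per-term lower bound $m_k'-n_k'\ge(\hat r-\epsilon)\,n_{k+1}'$ and, as its immediate consequence together with $m_k'-n_k'\le(r+\epsilon)n_k'$, the telescoping growth control $n_{k+1}'\le\frac{r+\epsilon}{\hat r-\epsilon}n_k'$. Granting these, the core computation is a geometric summation: using the per-term bound,
\[
\sum_{i=1}^k(m_i'-n_i')\ \ge\ (\hat r-\epsilon)\sum_{i=1}^k n_{i+1}'\ =\ (\hat r-\epsilon)\sum_{j=2}^{k+1}n_j',
\]
and the growth control lets me compare every term downward to the last one, $n_j'\ge\bigl(\tfrac{\hat r-\epsilon}{r+\epsilon}\bigr)^{k+1-j}n_{k+1}'$. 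Summing the resulting finite geometric series and letting $k\to\infty$ gives
\[
\sum_{i=1}^k(m_i'-n_i')\ \ge\ (\hat r-\epsilon)\,\frac{r+\epsilon}{(r+\epsilon)-(\hat r-\epsilon)}\,\bigl(1-o(1)\bigr)\,n_{k+1}',
\]
whose leading constant tends to $\frac{\hat r\,r}{r-\hat r}$ as $\epsilon\to0$. Choosing $\epsilon$ small and then $k$ large produces the desired factor $\frac{\hat r\,r}{r-\hat r}-\epsilon'$, which is the asserted inequality (\ref{i3}).

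The hard part is the per-term lower bound $m_k'-n_k'\ge(\hat r-\epsilon)n_{k+1}'$ for the \emph{extracted} sequence, which is more delicate than the raw $\liminf$ inequality: the latter controls $m_l-n_l$ only against the full-sequence successor $n_{l+1}$, whereas $n_{k+1}'=n_{j_{k+1}}$ jumps ahead to the next selected index. Here I would exploit the selection built into Lemma \ref{rep}. At each retained index $j_k$ the inequality $n_{j_k+1}\ge(1-\delta)m_k'$ from (\ref{seq}) holds, while the $\liminf$ bound gives $n_{j_k+1}\le\frac{m_k'-n_k'}{\hat r-\epsilon}$; together these yield $m_k'-n_k'\ge(\hat r-\epsilon)(1-\delta)m_k'$, so that $m_k'-n_k'$ and $m_k'$ are comparable. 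The remaining point is to pass from $n_{j_k+1}$ to $n_{k+1}'=n_{j_{k+1}}$ at a cost absorbable into $\epsilon,\delta$; this is where the non-decreasing arrangement of $\{m_l-n_l\}$ (which pins the skipped intermediate gaps between consecutive retained gaps) together with the full-sequence growth control $n_{l+1}\le\frac{r+\epsilon}{\hat r-\epsilon}n_l$ must be invoked. Verifying this index bookkeeping carefully is the step that carries essentially all the difficulty; once it is in place, the geometric summation above is routine, and I would finish by sending $\epsilon,\delta\to0$ to obtain an arbitrary $\epsilon'>0$.
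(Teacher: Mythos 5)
Where your argument is actually carried out, it coincides with the paper's proof: the paper too works from the two per-term bounds $m'_k-n'_k\le (r_\beta(x)+\epsilon)n'_k$ and $m'_k-n'_k\ge(\hat{r}_\beta(x)-\epsilon)n'_{k+1}$ (its (\ref{i1}) and (\ref{i2})), chains them into $n'_k\ge\frac{\hat{r}_\beta(x)-\epsilon}{r_\beta(x)+\epsilon}n'_{k+1}$, and sums the resulting geometric series to reach the constant $\frac{\hat{r}_\beta(x)r_\beta(x)}{r_\beta(x)-\hat{r}_\beta(x)}-\epsilon'$. So there is no methodological divergence in the summation part.

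The genuine gap is exactly the step you flagged and left open: the per-term bound $m'_k-n'_k\ge(\hat{r}_\beta(x)-\epsilon)n'_{k+1}$ for the \emph{extracted} sequence, and your sketched repair cannot be completed with the tools you invoke. From (\ref{seq}) and the full-sequence liminf you do get $m'_k-n'_k\ge(\hat{r}_\beta(x)-\epsilon)n_{j_k+1}\ge(\hat{r}_\beta(x)-\epsilon)(1-\delta)m'_k$, but to replace $n_{j_k+1}$ by $n'_{k+1}=n_{j_{k+1}}$ you need an upper bound on $n_{j_{k+1}}$ in terms of $m'_k$, and neither the monotonicity of $\{m_l-n_l\}$ nor the growth control $n_{l+1}\le\frac{r+\epsilon}{\hat{r}-\epsilon}n_l$ supplies one: an index $l$ is skipped by the selection whenever $n_{l+1}<(1-\delta)m_l$, and such overlapping steps never threaten the liminf hypothesis, because there $\frac{m_l-n_l}{n_{l+1}}\ge\frac{m_l-n_l}{m_l}$, which is roughly $\frac{r}{1+r}\ge\hat{r}$ when $\frac{m_l-n_l}{n_l}$ is near $r$. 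Hence arbitrarily long blocks of skipped indices, each inflating $n_l$ by a factor close to $(1-\delta)(1+r)>1$, are fully consistent with (\ref{sup}), (\ref{inf}) and with the conclusions of Lemmas \ref{mn} and \ref{rep}, and along such blocks $n_{j_{k+1}}/m'_k$ blows up, so the ``cost'' is not absorbable into $\epsilon,\delta$ by any index bookkeeping. You should also know that the paper does no better at this point: its proof opens by asserting (\ref{inf1}), i.e. $\hat{r}_\beta(x)\le\liminf_k (m'_k-n'_k)/n'_{k+1}$, as if a liminf were inherited under passage to a subsequence; but since $n_{j_{k+1}}\ge n_{j_k+1}$, the extraction only \emph{decreases} each ratio, so that inference runs the wrong way --- it is precisely the mismatched-index inference you correctly refused to make. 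So your diagnosis locates the real crux of the lemma, but as a proof your proposal (like the paper's own argument at this step) is incomplete, and closing the gap needs an ingredient beyond what either text provides, e.g. combinatorial rigidity of overlapping returns in $\beta$-expansions or a different choice of the subsequence in Lemma \ref{rep}.
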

\begin{proof}
By Lemma \ref{rep}, the sequences $\{n'_k\}_{k=1}^\infty$ and $\{m'_k\}_{k=1}^\infty$  are the subsequences of$\{n_k\}_{k=1}^\infty$ and $\{m_k\}_{k=1}^\infty$, the equalities (\ref{sup}) and (\ref{inf}) become:
\begin{equation}\label{sup1}
r_\beta(x)\geq\limsup_{k\rightarrow \infty}\frac{m'_k-n'_k}{n'_k}
\end{equation}
and
\begin{equation}\label{inf1}
\hat{r}_\beta(x)\leq\liminf_{k\rightarrow \infty}\frac{m'_k-n'_k}{n'_{k+1}}.
\end{equation}
By (\ref{sup1}) and (\ref{inf1}), for any real number $0<\epsilon<\frac{\hat{r}_\beta(x)}{2}$, there exits a large enough $k_1$, such that for all $k\geq k_1$,
\begin{equation}\label{i1}
m'_k-n'_k\leq (r_\beta(x)+\epsilon)n'_k
\end{equation}
and
\begin{equation}\label{i2}
m'_k-n'_k \geq (\hat{r}_\beta(x)-\epsilon)n'_{k+1}.
\end{equation}
By (\ref{i1}) and (\ref{i2}), we have $$n'_k\geq\frac{m'_k-n'_k}{r_\beta(x)+\epsilon} \geq \frac{\hat{r}_\beta(x)-\epsilon}{r_\beta(x)+\epsilon}n'_{k+1}.$$ Hence,
\begin{equation}\label{sum}
\sum_{i=1}^k (m'_i-n'_i)\geq\sum_{i=k_1}^k (\hat{r}_\beta(x)-\epsilon)n'_{i+1} \geq (\hat{r}_\beta(x)-\epsilon)n'_{k+1}\sum_{i=0}^{k-k_1}\left(\frac{\hat{r}_\beta(x)-\epsilon}{r_\beta(x)+\epsilon}\right)^i.
\end{equation}Let $$n'=(\hat{r}_\beta(x)-\epsilon)n'_{k+1}\sum_{i=k-k_1+1}^{\infty}\left(\frac{\hat{r}_\beta(x)-\epsilon}{r_\beta(x)+\epsilon}\right)^i.$$ Note that $\frac{\hat{r}_\beta(x)-\epsilon}{r_\beta(x)+\epsilon}<1$. There exists $k'\geq k$ such that for all $\ell\geq k'$, we have $$n'_{k+1}\left(\frac{\hat{r}_\beta(x)-\epsilon}{r_\beta(x)+\epsilon}\right)^\ell< \frac{1}{\ell^2}.$$ Thus, $$n'_{k+1}\sum_{i=k'+1}^\infty\left(\frac{\hat{r}_\beta(x)-\epsilon}{r_\beta(x)+\epsilon}\right)^i<\sum_{i=k'+1}^\infty \frac{1}{i^2}<+\infty.$$ This implies  $$n'=(\hat{r}_\beta(x)-\epsilon)n'_{k+1}\sum_{i=k-k_1+1}^{k'}\left(\frac{\hat{r}_\beta(x)-\epsilon}{r_\beta(x)+\epsilon}\right)^i+ (\hat{r}_\beta(x)-\epsilon)n'_{k+1}\sum_{i=k'+1}^{\infty}\left(\frac{\hat{r}_\beta(x)-\epsilon}{r_\beta(x)+\epsilon}\right)^i<+\infty.$$
By (\ref{sum}), for any small enough real number $\epsilon'>0$, there is a sufficiently large integer $k\geq\{k',k_1\}$ such that
$$\begin{aligned}\sum_{i=1}^k (m'_i-n'_i)&\geq (\hat{r}_\beta(x)-\epsilon)n'_{k+1}\sum_{i=0}^{k-1}\left(\frac{\hat{r}_\beta(x)-\epsilon}{r_\beta(x)+\epsilon}\right)^i\\&= (\hat{r}_\beta(x)-\epsilon)n'_{k+1}\sum_{i=0}^\infty\left(\frac{\hat{r}_\beta(x)-\epsilon}{r_\beta(x)+\epsilon}\right)^i-n'\\&\geq n'_{k+1}\left(\frac{\hat{r}_\beta(x)r_\beta(x)}{r_\beta(x)-\hat{r}_\beta(x)}-\epsilon'\right).\end{aligned}$$
\end{proof}

We will prove Theorem \ref{t2} by dividing into three cases: $r=\hat{r}=0$,  $0\leq \frac{r}{1+r}<\hat{r}\leq +\infty$ and $0\leq \hat{r}\leq \frac{r}{1+r},\ 0<r\leq +\infty$.
\subsection{Case for  $r=\hat{r}=0$}
Note that $\hat{r}_{\beta}(x)\leq r_{\beta}(x)$ for all $x\in[0,1)$. Then $$\{x\in[0,1):r_\beta(x)=0\}\subseteq \{x\in[0,1):\hat{r}_\beta(x)=0\}.$$ So if we prove that  $r_\beta(x)=0$ for $\mathcal{L}$-almost all $x\in[0,1)$, we have $\hat{r}_\beta(x)=0$ for $\mathcal{L}$-almost all $x\in[0,1)$.
%We first give a result due to Philipp \cite{PH} to obtain the size of the set $\{x\in[0,1):r_\beta(x)=0\}$.
%\begin{theorem}[Philipp \cite{PH}]\label{ph}
%Let $\{B_n\}_{n=1}^\infty$ be an arbitrary sequence of intervals contained in $[0,1)$. For any positive integer $N$ and $x\in[0,1)$, denote by $A(N,x)$ the number of positive integer $n\leq N$ such that $T^n_\beta x\in B_n$. Put $$\phi(N)=\sum_{n\leq N} \nu_\beta(B_n).$$ Then
%$$A(N,x)=\phi(N)+O\left(\phi^{\frac{1}{2}}(N)\log^{\frac{3}{2}+\epsilon}\phi(N)\right),\ \epsilon>0$$for almost all $x\in [0,1)$ where the constant implied by $O$
%is an absolute constant.
%\end{theorem}

%Let $\varphi:\ \N \rightarrow \R^+$ be a function with $\varphi(n)\rightarrow 0$ as $n\rightarrow +\infty$. Let $B_n=B(x,\varphi(n))$ which is  an interval whose center is $x$ with radius $\varphi(n)$. By the equivalence between $\nu_\beta$ and $\mathcal{L}$ (see (\ref{nu})), the following corollary is straightforward.
%\begin{corollary}\label{phc}
%The set $$\{x\in[0,1):|T_\beta^nx-x|\leq \varphi(n){\rm\ for\ infinitely\ many\ }n\in \N\}$$ is of full Lebesgue measure if $\sum\limits_{n=1}^\infty\varphi(n)=+\infty$. Otherwise, it is of null Lebesgue measure.
%\end{corollary}
%\begin{lemma}\label{le}
%We have $\mathcal{L}\{x\in[0,1):r_\beta(x)=0\}=1$. Moreover, $\mathcal{L}\{x\in[0,1):\hat{r}_\beta(x)=0\}=\mathcal{L}(R_\beta(0,0))=1$.
%\end{lemma}
%\begin{proof}
Hence, we only need to prove $$\mathcal{L}\{x\in[0,1):r_\beta(x)>0\}=0.$$
Since $\sum\limits_{n=1}^\infty\beta^{-\frac{1}{k}n}<\infty$ for all $k\geq 1$, it follows from Philipp \cite{PH}(Theorem 2A) that
$$\mathcal{L}\{x\in[0,1):|T_\beta^nx-x|\leq \beta^{-\frac{1}{k}n} {\rm\ for\ infinitely\ many\ }n\in\N\}=0.$$ Moreover, we have $$\{x\in[0,1):r_\beta(x)>0\}=\bigcup_{k=1}^\infty\left\{x\in[0,1):r_\beta(x)>\frac{1}{k}\right\}$$ and $$\left\{x\in[0,1):r_\beta(x)>\frac{1}{k}\right\}\subseteq \{x\in[0,1):|T_\beta^nx-x|\leq \beta^{-\frac{1}{k}n} {\rm\ for\ infinitely\ many\ }n\in\N\}.$$ Thus, $$\mathcal{L}\{x\in[0,1):r_\beta(x)>0\}=0.$$
%The rest of the statement is immediate.
%\end{proof}

\subsection{Case for $0\leq \frac{r}{1+r}<\hat{r}\leq +\infty$}

When $\varepsilon(x,\beta)$ is periodic, we have $\hat{r}=r=+\infty$ and the set with such $\varepsilon(x,\beta)$ is countable.

When $\varepsilon(x,\beta)$ is not periodic, let $m_k$ and $n_k$ be defined in Lemma \ref{mn}. We will construct a countable set $D$ such that $\varepsilon(x,\beta)\in D$ for all $x\in R_\beta(\hat{r},r)$ with $0\leq \frac{r}{1+r}<\hat{r}\leq +\infty$. By Lemma \ref{com}, there is an integer $k'$, such that for all $k\geq k'$, we have $n_{k+1}<m_k$.  Let $t_k$ be defined by (\ref{tk}). Suppose $t_{k}=an_{k}+p_k$ where $1\leq a\leq \lfloor r\rfloor+1$ and $1\leq p_k<n_{k}$. Then, by (\ref{e1}),
\begin{equation}\label{e2}
\left(\varepsilon_1,\ldots,\varepsilon_{t_k}\right)=\left(\left(\varepsilon_1,\ldots,\varepsilon_{n_k}\right)^a,\varepsilon_1,\ldots,\varepsilon_{p_k}\right), \quad \varepsilon_{t_k+1}\neq \varepsilon_{p_k+1}.
\end{equation}where $\omega^k=(\underbrace{\omega,\ldots,\omega}_k)$ for any $\omega\in \Sigma^\ast_\beta,\ k\in \mathbb{N}$. Therefore, when $n_{k+1}\leq t_k\leq m_k+1$, by the definitions of $\{n_k\}_{k=1}^\infty$ and $\{t_k\}_{k=1}^\infty$, we have\begin{equation}\label{e3}
\left(\varepsilon_1,\ldots,\varepsilon_{t_{k}}\right)= \left(\varepsilon_1,\ldots, \varepsilon_{n_k},\varepsilon_1,\ldots,\varepsilon_{n_{{k+1}}-n_{k}},\varepsilon_1,\ldots,\varepsilon_{t_k-n_{k+1}}\right),\  \varepsilon_{t_{k}+1}=\varepsilon_{t_{k}-n_{k+1}+1}.
\end{equation} By comparing the equalities (\ref{e1}), (\ref{e2}) and  (\ref{e3}), we can check that $n_{k+1}\neq a'n_k$ for all integer $1\leq a'\leq a$. Suppose that there are two integers $1\leq a'\leq a$ and  $1\leq j_k<n_k$ such that $n_{{k+1}}=a'n_k+j_k$. Then $$\left(\varepsilon_1,\ldots,\varepsilon_{n_{k+1}}\right)=\left(\left(\varepsilon_1,\ldots,\varepsilon_{n_{k}}\right)^{a'},\varepsilon_1,\ldots, \varepsilon_{j_k}\right).$$ When $t_k<n_{k+1}<m_k$, by (\ref{1}), (\ref{2}) and  (\ref{e2}), it follows that $$\left(\varepsilon_1,\ldots,\varepsilon_{n_{k+1}}\right)=\left(\left(\varepsilon_1,\ldots,\varepsilon_{n_k}\right)^a,\varepsilon_1,\ldots, \varepsilon_{p_k},\varepsilon_{p_k+1}-1,\varepsilon_1^\ast(\beta),\ldots,\varepsilon_{n_{k+1}-p_k-1}^\ast(\beta)\right)$$ or $$\left(\varepsilon_1,\ldots,\varepsilon_{n_{k+1}}\right)=\left(\left(\varepsilon_1,\ldots,\varepsilon_{n_k}\right)^a,\varepsilon_1,\ldots, \varepsilon_{p_k},\varepsilon_{p_k+1}+1,0^{n_{k+1}-p_k-1}\right).$$

Now we will construct the countable set $D$. For all $\omega=(\omega_1,\ldots,\omega_n) \in\Sigma_\beta^n$, define $$s_1(\omega)=\min\left\{1\leq i<n:\omega_{i+1}=\omega_1\right\}.$$ If such $i$ does not exist, let $s_1(\omega)=n$. If $s_1(\omega)<n$, let
$$t_1(\omega)=\max\left\{s_1(\omega)< i\leq n:\left(\omega_{s_1(\omega)+1},\ldots,\omega_i\right)=\left(\omega_1,\ldots,\omega_{i-s_1(\omega)}\right)\right\}.$$ Suppose that $s_k(\omega)$ and  $t_k(\omega)$ have been defined, let $$s_{k+1}(\omega)=\min\{ s_k(\omega)\leq i<n:\omega_{i+1}=\omega_1\}.$$ If this $i$ does not exist, let $s_{k+1}(\omega)=n$. If $s_{k+1}(\omega)<n$,  let  $$t_{k+1}(\omega)=\max\left\{s_k(\omega)<i\leq n:\left(\omega_{s_k(\omega)+1},\ldots,\omega_i\right)=\left(\omega_1,\ldots,\omega_{i-s_k(\omega)}\right)\right\}.$$  Let $k(\omega)=\max\{k:1\leq s_k(\omega)<n\}$ and $k(\omega)=0$ if such $k$ does not exist.

For all $\omega=(\omega_1,\ldots,\omega_n) \in\Sigma_\beta^n$,
when $k(\omega)>0$, let $$M'(\omega)=\bigcup_{a=1}^{\lfloor r\rfloor+1}\bigcup_{k=1}^{k(\omega)}\bigcup_{j=0}^{\lfloor r\rfloor n}\left\{\left(\omega^a,u,\omega_{t_k(\omega)+1}-1,\varepsilon_1^\ast(\beta),\ldots,\varepsilon_j^\ast(\beta)\right) :u=\left(\omega_1,\ldots,\omega_{t_k(\omega)}\right),\ \omega_{t_k(\omega)+1}>0\right\}$$ where $\left(\omega^a,u,\omega_{t_k(\omega)+1}-1,\varepsilon_1^\ast(\beta),\ldots,\varepsilon_j^\ast(\beta)\right) =\left(\omega^a,u,\omega_{t_k(\omega)+1}-1\right)$ when $j=0$, and $$M''(\omega)=\bigcup_{a=1}^{\lfloor r\rfloor+1}\bigcup_{k=1}^{k(\omega)}\bigcup_{j=0}^{\lfloor r\rfloor n}\left\{\left(\omega^a,u,\omega_{t_k(\omega)+1}+1,0^j\right) :u=\left(\omega_1,\ldots,\omega_{t_k(\omega)}\right),\ 0\leq \omega_{t_k(\omega)+1}<\lfloor\beta \rfloor\right\}$$ where $\left(\omega^a,u,\omega_{t_k(\omega)+1}+1,0^j\right)=\left(\omega^a,u,\omega_{t_k(\omega)+1}+1\right)$ when $j=0$.  When $k(\omega)=0$, let $$M'(\omega)=\bigcup_{a=1}^{\lfloor r\rfloor+1}\bigcup_{j=0}^{\lfloor r\rfloor n} \left\{\left(\omega^a,\omega_1-1,\varepsilon_1^\ast(\beta),\ldots,\varepsilon_j^\ast(\beta)\right), \omega_1>0\right\}$$ where $\left(\omega^a,\omega_1-1,\varepsilon_1^\ast(\beta),\ldots,\varepsilon_j^\ast(\beta)\right)=\left(\omega^a,\omega_1-1\right)$ when $j=0$, and $$M''(\omega)=\bigcup_{a=1}^{\lfloor r\rfloor+1}\bigcup_{j=0}^{\lfloor r\rfloor n}\left\{\left(\omega^a,\omega_1+1,0^j\right), 0\leq \omega_1<\lfloor \beta\rfloor\right\}$$ where $\left(\omega^a,\omega_1+1,0^j\right)=\left(\omega^a,\omega_1+1\right)$ when $j=0$.

Now fix $\omega=(\omega_1,\ldots,\omega_n) \in\Sigma_\beta^n$ and let $M(\omega)=M'(\omega)\bigcup M''(\omega)\bigcup\left\{\omega\right\}$. Set $M_1(\omega)=M(\omega)\bigcup \{(\omega^2)\}$. Suppose that $M_k(\omega)$ has been defined. Let
$$M_{k+1}(\omega)=\bigcup_{v_k\in M_k(\omega)}M(v_k)\bigcup\left\{(\omega^k)\right\}.$$ Then we have $\sharp M_k(\omega)<\infty$ for all $k\geq 1$ and we also have  $M_k(\omega)\subset \A^\ast$. Let $$D=\bigcup_{n=1}^\infty\bigcup_{\omega\in\Sigma_\beta^n} \bigcup_{k=1}^\infty M_k(\omega).$$ By the former analysis in this section, for any $x\in R_\beta(\hat{r},r)$ with $0\leq \frac{r}{1+r}<\hat{r}\leq +\infty$, we have $\varepsilon(x,\beta)\in D$ and the set $D$ is countable. As a consequence, the set  $R_\beta(\hat{r},r)\left(0\leq \frac{r}{1+r}<\hat{r}\leq +\infty\right)$ is countable.
%\begin{remark}
%In our proof, when $s_k(\omega)=n$, we can stop choosing $s_{k+1}(\omega)$ since $s_{k+1}(\omega)$ will do not exists. Moreover, $k(\omega)=0$ when $s_1(\omega)=n$ and in this case, we can see that $\omega$ does not return to itself.
%\end{remark}
\subsection{Case for $0\leq \hat{r}\leq \frac{r}{1+r},\ 0<r\leq +\infty$}
Note that $\{x\in[0,1):r_\beta(x)=+\infty\}\subseteq \{x\in[0,1):r_\beta(x)\geq r\}$ for all $0\leq r\leq+\infty$. When $0\leq \hat{r}\leq \frac{r}{1+r},\ r=+\infty$,
%note that $R_\beta(r,\hat{r})\subseteq \{x\in[0,1): r_\beta(x)=+\infty\}$, the Huasdorff dimension of $R_\beta(r,\hat{r})$ is obtained by the following %result.
%\begin{lemma}\label{le1}
%Let $\beta>1$ be a real number. Then $\dim_{\rm H}\{x\in[0,1): r_\beta(x)=+\infty\}=0.$
%\end{lemma}
%\begin{proof}
 by (\ref{tw}), it holds that $$\dim_{\rm H}\{x\in[0,1):r_{\beta}(x)\geq r\}=\frac{1}{1+r}.$$ Consequently, letting $r \rightarrow +\infty$, we conclude that $$\dim_{\rm H}\{x\in[0,1): r_\beta(x)=+\infty\}=0.$$ By the fact that $R_\beta(r,\hat{r})\subseteq \{x\in[0,1): r_\beta(x)=+\infty\}$, we have $\dim R_\beta(r,\hat{r})=0.$
%\end{proof}

When $0\leq \hat{r}\leq \frac{r}{1+r},\ 0<r<+\infty$. Our proof is divided into two parts.

\subsubsection{The upper bound of $\dim_{\rm H}R_\beta(\hat{r},r)$}
We now construct a covering of the set $R_\beta(\hat{r},r)$ with $0\leq \hat{r}\leq \frac{r}{1+r},\ 0<r<+\infty$. Let $\{n'_k\}_{k=1}^\infty$ and $\{m'_k\}_{k=1}^\infty$ be the sequences such that
\begin{equation}\label{lim}
\lim_{k\rightarrow \infty}\frac{m'_k-n'_k}{n'_k}= r\quad {\rm and}\quad \lim_{k\rightarrow \infty}\frac{m'_k-n'_k}{n'_{k+1}}\geq \hat{r}.
\end{equation}

Given $k\geq 1$, we collect all of the points $x$ with $\hat{r}_\beta(x)\geq \hat{r}$ and $r_\beta(x)=r$. We first calculate the possible choices of digits among the $m'_k$ prefix of $(\omega_1,\omega_2,\ldots)$. For the ``free'' blocks in the prefix of length $m'_k$ of the infinite sequence $\varepsilon(x,\beta)$. Write their lengths as $d_1,\cdots,d_k$. It follows immediately that $d_i=0$ when $n'_i<m'_{i-1}$ and  $d_i=n'_i-m'_{i-1}$ when $n'_i>m'_{i-1}$ for all $2\leq i\leq k$. Moreover, $d_1=n_1$. Let $m_0=0$. By (\ref{i3}) and the choice of $m_k$ and $n_k$, it follows from Lemma \ref{rep} that, for any sufficiently large $k\geq \max\{k_0,k_1,k'\}$,
\begin{equation}\label{i4}
\sum_{i=1}^k d_i\leq \sum_{i=1}^{k}(n'_i-m'_{i-1})+\sum_{n'_i<m'_{i-1}, k_0\leq i\leq k}(m'_{i-1}-n'_i).
\end{equation}
Applying Lemma \ref{rep}, we have
$$\sum_{n'_i<m'_{i-1, 1\leq i\leq k}}(m'_{i-1}-n'_i)\leq \delta\sum_{n'_i<m'_{i-1}, k_0\leq i\leq k} m'_{i-1}+n_0$$ where $n_0=\delta\sum\limits_{n'_i<m'_{i-1}, k_0\leq i\leq k} (m'_{i-1}-n'_i)<+\infty$. By (\ref{i1}) and Lemma \ref{rep}, we have
\begin{equation}\label{i5}
\begin{aligned}
\sum_{n'_i<m'_{i-1, 1\leq i\leq k}}(m'_{i-1}-n'_i)&\leq\delta\sum_{n'_i<m'_{i-1}, k_0\leq i\leq k}(1+r+\epsilon)n_{i-1}+n_0\\&\leq \delta\left(1+\frac{\hat{r}}{2+\hat{r}}+\ldots+\left(\frac{\hat{r}}{2+\hat{r}}\right)^k\right)(1+r+\epsilon)n_k+n_0.
\end{aligned}
\end{equation}By (\ref{i3}), it holds that
\begin{equation}\label{i6}
\sum_{i=1}^{k}(n'_i-m'_{i-1})=n'_k-\sum_{i=1}^{k-1}(m'_i-n'_i)\leq n'_k\left(1-\frac{\hat{r}r}{r-\hat{r}} +\epsilon'\right).
\end{equation} Combining (\ref{i4}), (\ref{i5}) and (\ref{i6}), we obtain $$\sum_{i=1}^k d_i\leq n'_k\left(1-\frac{\hat{r}r}{r-\hat{r}} +\epsilon''\right),$$
where $\epsilon''$ is a small enough real number. By Theorem \ref{R}, we deduce that, for every blocks with length $d_i$, there are no more than $$\frac{\beta}{\beta-1}\beta^{d_i}$$ ways of the words can be chosen. Thus, there are at most
$$\left(\frac{\beta}{\beta-1}\right)^k\beta^{\sum\limits_{i=1}^k d_i}\leq \left(\frac{\beta}{\beta-1}\right)^k\beta^{n'_k\left(1-\frac{\hat{r}r}{r-\hat{r}} +\epsilon''\right)}$$ choices of the ``free'' blocks in total. Notice that there are at most $n_k$ possible choices for the first index of the $k$ blocks. This indicates that there are at most ${n'_k}^k$ possible choices for the position of the ``free'' blocks. For the ``fixed'' block, it follows from (\ref{e4}), (\ref{1}), (\ref{2}) that the block $(\varepsilon_1,\ldots,\varepsilon_{m'_k-n'_k})$ has at most $2\lfloor\beta\rfloor(m'_k-n'_k+1)\leq 2\beta(m'_k-n'_k+1)$ choices which means there are at most $\left(2\beta(m'_k-n'_k+1)\right)^k$ choices of the ``fixed'' blocks in total. By Lemma \ref{rep} and (\ref{lim}), for all sufficient large $k$, the set of all real number belonging to $R_\beta(\hat{r},r)$ is contained in a union of no more than $$ \left(\frac{2(m'_k-n'_k)n'_k\beta^2}{\beta-1}\right)^k \beta^{n'_k\left(1-\frac{\hat{r}r}{r-\hat{r}} +\epsilon''\right)} \leq\left(\frac{2(r+\epsilon''){n'_k}^2\beta^2}{\beta-1}\right)^{C\log n'_k} \beta^{n'_k\left(1-\frac{\hat{r}r}{r-\hat{r}} +\epsilon''\right)}$$ cylinders of order $m'_k$ whose length is at most $$\beta^{-m'_k}\leq \beta^{(1+r-\epsilon'')n'_k},$$ where the last inequalities follows from (\ref{lim}). Denote $$s_0=\frac{r-(1+r)\hat{r}+\epsilon''(r-\hat{r})}{(r-\hat{r})(r+1-\epsilon'')}.$$ Then for any $s>s_0$, we have
$$\H^s\left(R_\beta(\hat{r},r)\right)\leq \sum_{n=1}^\infty\beta \left(\frac{2(r+\epsilon'')n^2\beta^2}{\beta-1}\right)^{C\log n}\beta^{-(1+r-\epsilon'')ns+n\left(1-\frac{\hat{r}r}{r-\hat{r}} +\epsilon''\right)}<+\infty.$$

Letting $\epsilon''\rightarrow 0$, we conclude that $$\dim_{\rm H} R_\beta(\hat{r},r)\leq \frac{r-(1+r)\hat{r}}{(r-\hat{r})(r+1)}.$$

\subsubsection{Construction of Cantor Set}

We construct a Cantor subset of $R_\beta(\hat{r},r)\left(0\leq \hat{r}\leq \frac{r}{1+r},\ 0<r<+\infty\right)$ as follows. Fix $\delta>0$. Let $\beta_N$ be the approximation of $\beta$ which is defined in Section 2. Notice that $\beta_N\rightarrow \beta$ as $N\rightarrow \infty$, we can choose sufficiently large integer $N$  with $\varepsilon_N^\ast(\beta)>0$ and $M$ large enough such that
\begin{equation}\label{m}
\frac{\sharp \Sigma_{\beta_N}^M}{M}-M-1\geq \frac{\beta_N^M}{M}-M-1\geq \beta^{M(1-\delta)}.
\end{equation}

Now we choose two sequences $\{n_k\}_{k=1}^\infty$ and  $\{m_k\}_{k=1}^\infty$ such that $n_k<m_k<n_{k+1}$ with $n_1>2M$. The sequence $\{m_k-n_k\}_{k=1}^\infty$ is non-decreasing. In addition,
\begin{equation}\label{inf2}
\lim_{k\rightarrow \infty}\frac{m_k-n_k}{n_{k+1}}=\hat{r}
\end{equation}
and
\begin{equation}\label{sup2}
\lim_{k\rightarrow \infty}\frac{m_k-n_k}{n_k}=r.
\end{equation}
Actually, we can choose the following sequences.

(1) When $\hat{r}=0$, $0<r<+\infty$, let
$$n'_k=k^k\quad {\rm and}\quad  m'_k=\left\lfloor\left(r+1\right) k^k\right\rfloor.$$  By a small adjustment, we can obtain the required sequences.

(2) When $0< \hat{r}\leq \frac{r}{1+r},\ 0<r<+\infty$, let
$$n'_k=\left\lfloor\left(\frac{r}{\hat{r}}\right)^k\right\rfloor\ {\rm and}\  m'_k=\left\lfloor(r+1)\left(\frac{r}{\hat{r}}\right)^k\right\rfloor.$$ Note that $r<\hat{r}$. Both of the sequences $\{n'_k\}_{k=1}^\infty$ and $\{m'_k\}_{k=1}^\infty$ increase to infinity as $k$ increases. We can adjust these sequences to make sure that they satisfy the required properties. Without any ambiguity, the statement that $\omega\in\Sigma_{\beta_N}^\ast$ is full means that $\omega$ is full when regarding it as an element of $\Sigma_{\beta}^\ast$.

For all $k\in\N$, let $m_k=\ell_kn_k+p_k$ with $0\leq p_k<n_k$ and  $n_{k+1}-m_k=t_k M+q_k$ with $0\leq q_k<M$.  For any integer $n>N$ and $\omega\in\Sigma_{\beta_N}^n$, define the function $a_{p_k}$ of $\omega$ by
$$a_{p_k}(\omega)=\left\{
\begin{aligned}
0^{p_k} & , &{\rm when}\ \ p_k \leq N, \\
\omega|_{p_k-N},0^N & , &{\rm when}\ \ p_k > N,\\
\end{aligned}
\right.$$  where $\omega|_i=(\omega_1,\ldots,\omega_i)$. Then $a_{p_k}(\omega)$ is full when regarding it as an element of $\Sigma_\beta^\ast$. For convenience, denote $$\omega^i=(\omega_{n-i+1},\ldots,\omega_n,\omega_{i+1},\ldots,\omega_{n-i})$$ for all $1\leq i<n$ and $\omega^n=\omega$. Let $$\D_1=\left\{v_1=\left((u)^{\lfloor\frac{n_1}{M}\rfloor},0^{n_1-\lfloor\frac{n_1}{M}\rfloor M}\right):u\in\Sigma_{\beta_N}^M  {\rm\ is\ full\ and\ } u\neq 0^M\right\}.$$ Set $$\G_1=\left\{u_1=\left((v_1)^{\ell_1},a_{p_1}(v_1)\right):v_1\in \D_1\right\}.$$ Fix $v_1=\left((u)^{\lfloor\frac{n_1}{M}\rfloor},0^{n-\lfloor\frac{n_1}{M}\rfloor M}\right)\in \D_1$.
Define
$$\M=\left\{\omega \in \Sigma_{\beta_N}^M: \omega {\rm\ is\ full\  and\ } \omega \neq u^i,\ \forall\ 1\leq i\leq n\right\}.$$ Suppose that $\D_{k-1}$ and $\G_{k-1}$ are well defined. Fix $u_{k-1}\in \G_{k-1}$.  Let $$\D_k=\left\{v_k=\left(u_{k-1}^0,\ldots,u_{k-1}^{t_{k-1}-1},0^{q_{k-1}}\right):u_{k-1}^i \in \M, \forall\ 0\leq i\leq t_{k-1}-1\right\}.$$
Define $$\G_k=\left\{u_k=\left((u_{k-1},v_k)^{\ell_k},a_{p_{k}}(u_{k-1},v_k)\right):u_{k-1}\in \G_{k-1}, v_k\in\D_k\right\}.$$ We can see that all of the words in $\M$ and $\D_k$ are full when regarding them as an element of $\Sigma_\beta^\ast$.  Hence, by Theorem \ref{AB}, $\G_k$ is well defined.

Now define $$E_N=\bigcap_{k=1}^{\infty}\bigcup_{u_k \in \G_k}I_{m_k}(u_k).$$ We claim that $E_N$ is a subset  $R_\beta(\hat{r},r)$.

In fact, for any $x\in E_N$, suppose $\varepsilon(x,\beta)=(\varepsilon_1,\varepsilon_2,\ldots)$. We first prove that for every $n$ with $m_k\leq n<n_{k+1}$, we have $$(\varepsilon_{n+1},\ldots,\varepsilon_{n+2M})\neq (\varepsilon_1,\ldots,\varepsilon_{2M})=(\varepsilon_1,\ldots,\varepsilon_M,\varepsilon_1,\ldots,\varepsilon_M),$$ where the last equality follows from the fact that $n_1>2M$  and the construction of $E_N$. In fact, if it is not true, then there is $m_k\leq n<n_{k+1}-2M$ such that $$(\varepsilon_{n+1},\ldots,\varepsilon_{n+2M})= (\varepsilon_1,\ldots,\varepsilon_{2M}).$$ Assume $n=m_k+tM+q\ (0\leq t\leq t_k-1)$. Then we have $$\left(\varepsilon_{n+1},\ldots,\varepsilon_{n+2M}\right)=\left(\varepsilon_{m_k+tM+q+1},\ldots,\varepsilon_{m_k+(t+1)M+1},\ldots, \varepsilon_{m_k+(t+2)M},\ldots, \varepsilon_{m_k+(t+2)M+q}\right).$$ This implies $$\left(\varepsilon_{m_k+(t+1)M+1},\ldots,\varepsilon_{m_k+(t+2)M}\right)=\left(\varepsilon_{M-q+1},\ldots,\varepsilon_M,\varepsilon_1,\ldots,\varepsilon_q\right)\notin \M,$$ which is a contradiction. Note that $n\geq 2M$. Suppose $$(\varepsilon_{n+1},\ldots,\varepsilon_{n+j})=(\varepsilon_1,\ldots,\varepsilon_j),\  \varepsilon_{n+j+1}\neq \varepsilon_{j+1}.$$ Without loss of generality, suppose $\varepsilon_{n+j+1}> \varepsilon_{j}+1$. Notice that both $(\varepsilon_{n+1},\ldots,\varepsilon_{n+j+1})$ and $(\varepsilon_1,\ldots,\varepsilon_{j+1})$ belong to $\Sigma_{\beta_N}^M$. Then
\begin{equation}\label{3}
\begin{aligned}
&|T_\beta^n x-x|=T_\beta^n x-x\\&\geq \frac{\varepsilon_{n+1}}{\beta}+\cdots+\frac{\varepsilon_{n+j}}{\beta^j}+\frac{\varepsilon_{n+j+1}}{\beta^{j+1}}-\left( \frac{\varepsilon_{n+1}}{\beta}+\cdots+\frac{\varepsilon_{n+j}}{\beta^j}+\frac{\varepsilon_j}{\beta^{j+1}}+\frac{\varepsilon_1^\ast(\beta)}{\beta^{j+2}}+\cdots+ \frac{\varepsilon_N^\ast(\beta)-1}{\beta^{j+N+1}} \right)\\&\geq \beta^{-j+N+1}.
\end{aligned}
\end{equation} Hence, for any $m_k\leq n<n_{k+1}$, we have
$$|T_\beta^nx-x| \geq \beta^{-(2M+N+1)}.$$ We now show that $r_\beta(x)=r.$

On the one hand, for any $\delta>0$, notice that $$\lim_{k\rightarrow\infty}\frac{m_k-n_k+N}{n_k}=r.$$ Then there exits $k_0$ large enough such that for all $k\geq k_0$, we have $m_k-n_k+N<(r+\delta)n_k$. Consequently, by (\ref{in}), for all $n\geq n_{k_0}$, there is $k\geq k_0$, such that $n_k\leq n< n_{k+1}$. The same argument as (\ref{3}) gives $$|T_\beta^n x-x|\geq \beta^{-(m_k-n_k+N+1)}>\beta^{-(r+\delta)n_k}\geq\beta^{-(r+\delta)n}.$$ By the definition of $r_\beta(x)$, it holds that $r_\beta(x)<r+\delta$ for all $\delta>0$. So $r_\beta(x)\leq r.$

On the other hand, for all $\delta>0$, there is $k_0$ such that for any $k\geq k_0$. Thus, we have $m_k-n_k\geq (r-\delta)n_k$. When $n=n_k$, we obtain $$|T^{n_k}_\beta x-x|<\beta^{-(m_k-n_k)}\leq\beta^{-(r-\delta)n_k}.$$ As a consequence, $r_\beta(x)\geq r-\delta$ for any $\delta>0$, which implies $r_\beta(x)\geq r.$

The proof of $\hat{r}_\beta(x)=\hat{r}$ is similar to the argument of $r_\beta(x)=r.$ We leave the details to the readers.

%On the one side, for all $\delta>0$, since $$\lim_{k\rightarrow\infty}\frac{m_k-n_k}{n_{k+1}}=\hat{r}.$$ There exits sufficient large $k_0$ such that for all $k\geq k_0$, we have $m_k-n_k\geq (\hat{r}-\delta)n_{k+1}$. Then for each $n\geq n_{k_0}$, there is $k\geq k_0$, such that $n_k\leq n< n_{k+1}$ and $$|T_\beta^{n_k} x-x|< \beta^{-(m_k-n_k)}\leq\beta^{-(\hat{r}-\delta)n_{k+1}}\leq\beta^{-(\hat{r}-\delta)n}.$$ The definition of $\hat{r}_\beta(x)$ yields that $\hat{r}_\beta(x)\geq \hat{r}-\delta$ for all $\delta>0$. Thus, $\hat{r}_\beta(x)\geq \hat{r}.$
%
%On the other side, for every $\delta>0$, there is an integer $k_0$ such that for any $k\geq k_0$, we have $m_k-n_k+N\leq(\hat{r}+\delta)(n_{k+1}-1)$. Then for all $n\geq 1$, there is an integer $k\geq k_0$, such that $1\leq n\leq n_{k+1}-1$. By the same argument as (\ref{3}), it holds that for any $1\leq n\leq n_{k+1}-1$,  $$|T^n_\beta x-x|\geq \beta^{-(m_k-n_k+N+1)}\geq\beta^{-(\hat{r}+\delta)(n_{k+1}-1)}.$$ So $\hat{r}_\beta(x)<\hat{r}+\delta$ for any $\delta>0$, which means $\hat{r}_\beta(x)\leq \hat{r}.$

%\begin{proof} For any $n\geq1$, there is an integer $k\in \N$ such that $n_k< n\leq n_{k+1}$. By the construction of $E_N$, we have
%$$\beta^{-m_k+n_k+N}\leq|T_\beta^Nx-x|\leq \beta^{-m_k+n_k}.$$
%Then, $$r_\beta(x)=\lim_{k\rightarrow\infty}\frac{m_k-n_k}{n_k}=r$$ and $$\hat{r}_\beta(x)=\lim_{k\rightarrow\infty}\frac{m_k-n_k}{n_{k+1}}=\hat{r}.$$
Our Cantor set is therefore constructed.
\subsubsection{ The lower bound of $\dim_{\rm H}R_\beta(\hat{r},r)$}
The rest of this section is devoted to estimating the lower bound of Hausdorff dimension of $E_N$ by the modified mass distribution principal.

As the classical method of giving the lower bound of $\dim_{\rm H} E_N$, we first define a Borel probability measure $\mu$ on $E_N$. Set $$\mu([0,1))=1,\ {\rm and}\ \mu\left(I_{m_1}(u_1\right))=\frac{1}{\sharp \G_1}=\frac{1}{\sharp \D_1},\ {\rm for}\ u_1 \in \G_1.$$ For each $k\geq 1,$ if $u_{k+1}=\left((u_k,v_{k+1})^{\ell_{k+1}},a_{p_{k+1}}(u_k,v_{k+1})\right)\in \G_{k+1}$, let
\begin{equation}\label{mu}
\mu\left(I_{m_{k+1}}(u_{k+1})\right)=\frac{\mu\left(I_{m_k}(u_k)\right)}{\sharp \D_{k+1}}.
\end{equation}  If $u_{k+1} \notin \G_{k+1}$ ($k\geq 1$), let $\mu\left(I_{m_{k+1}}(u_{k+1})\right)=0$. For each $n\in\N$ and each cylinder $I_n$ with $I_n\cap E_N\neq \emptyset$, let $k\geq1$ be the integer such that $m_k<n\leq m_{k+1}$, and set $$\mu(I_n)=\sum_{I_{m_{k+1}}\subseteq I_n}\mu\left(I_{m_{k+1}}\right),$$ where the sum is taken over all the basic cylinders associate to $u_{k+1}\in\G_{k+1}$ contained in $I_n$. We can see that $\mu$ satisfies the consistency property which ensures that it can be uniquely extended to a Borel probability measure on $E_N$.

Now we will estimate the local dimension $\liminf\limits_{n\rightarrow \infty}\frac{\log \mu(I_n)}{\log|I_n|}$ for all basic cylinder $I_n$ with $I_n\cap E_N\neq \emptyset$. We claim that, there exists $k_0$, for all $k\geq k_0$, we have
\begin{equation}\label{qk}
\sharp\G_k\geq c^k\beta^{(1-\delta)\sum\limits_{i=1}^{k-1}(n_{i+1}-m_i)}
\end{equation}
for all $k\geq k_0$. Combining Theorems \ref{R}, \ref{no} and the definition of $\M$, by (\ref{m}), we have $$\sharp \M\geq  \frac{\sharp\Sigma_\beta^M}{M}-M \geq \beta^{M(1-\delta)}.$$

Immediately, for any $k\geq1$, the relationship between $\M$ and $\D_k$ gives
$$\sharp\D_k=(\sharp \M)^{t_{k-1}}\geq \beta^{Mt_{k-1}(1-\delta)}= \beta^{-M(1-\delta)}\cdot\beta^{M(1-\delta)(t_{k-1}+1)}\geq c\beta^{(1-\delta)(n_k-m_{k-1})},$$ where $c=\beta^{-M(1-\delta)}$.
As a consequence,
$$\sharp \G_k=\prod_{i=1}^{k}\sharp \D_i\geq c^k\beta^{(1-\delta)\sum\limits_{i=1}^{k-1}(n_{i+1}-m_i)}.$$

Therefore, for all $i\geq2$,
\begin{equation}\label{m1}
\mu(I_{m_i})=\frac{1}{\sharp\G_i}\leq \frac{1}{c^k\beta^{(1-\delta)\sum\limits_{j=1}^{i-1}(n_{j+1}-m_j)}}.
\end{equation} In gerneral $n\geq 1$, there is an integer $k\geq 1$ such that $m_k<n\leq m_{k+1}$. We distinguish two cases to estimate $\mu(I_n)$.

Case 1. When $m_k<n\leq n_{k+1}$, write $n=m_k+t M+q$ with $0\leq t\leq t_k$ and $0\leq q<M$. then
$$\mu(I_n)\leq \mu(I_{m_k})\cdot\frac{1}{(\sharp \M)^t}\leq  c^{-k}\beta^{-(1-\delta)\sum\limits_{j=1}^{k-1}(n_{j+1}-m_j)}\cdot\frac{1}{\beta^{M(1-\delta)t}}.$$

Case 2. When $n_{k+1}<n\leq m_{k+1}$, by the construction of $E_N$, it follows that $$\mu(I_n)\leq \mu(I_{n_{k+1}})=\mu(I_{m_{k+1}})\leq c^{-(k+1)} \beta^{-(1-\delta)\sum\limits_{j=1}^k(n_{j+1}-m_j)}.$$

By (\ref{in}), in both two cases, $$|I_n|\geq \frac{1}{\beta^{n+N}}.$$ Since $$\frac{a+x}{b+x}\geq\frac{a}{b}\ {\rm for\ all}\ 0<a\leq b,\ x\geq 0,$$ it holds that$$\liminf_{n\rightarrow\infty}\frac{\log \mu(I_n)}{\log|I_n|}\geq (1-\delta)\lim_{k\rightarrow\infty}\frac{\sum\limits_{j=1}^{k-1}(n_{j+1}-m_j)}{m_k}.$$ By (\ref{inf2}) and (\ref{sup2}), $$\lim_{k\rightarrow\infty}\frac{n_k}{m_k}=\frac{1}{1+r},\ \lim_{k\rightarrow\infty}\frac{n_{k}}{m_{k-1}}=\frac{r}{\hat{r}(1+r)}\ {\rm and}\ \lim_{k\rightarrow\infty}\frac{m_k}{m_{k-1}}=\frac{r}{\hat{r}}.$$ By the Stolz-Ces\`{a}ro theorem, we deduce that$$
\lim_{k\rightarrow\infty}\frac{\sum\limits_{j=1}^{k-1}(n_{j+1}-m_j)}{m_k}=\lim_{k\rightarrow\infty}\frac{n_k-m_{k-1}}{m_k-m_{k-1}}
=\lim_{k\rightarrow\infty}\frac{\frac{n_k}{m_{k-1}}-1}{\frac{m_k}{m_{k-1}}-1}=\frac{r-(1+r)\hat{r}}{(1+r)(r-\hat{r})}.$$
Hence, $$\liminf_{n\rightarrow \infty}\frac{\log \mu(I_n)}{\log|I_n|}\geq (1-\delta)\cdot\frac{r-(1+r)\hat{r}}{(1+r)(r-\hat{r})}.$$

Let $\delta\rightarrow0$. The modified mass distribution principle (Theorem \ref{mp}) gives $$\dim_{\rm H}E_N\geq \frac{r-(1+r)\hat{r}}{(1+r)(r-\hat{r})}.$$ for all sufficient large enough $N$.

%\textbf{Proof of Theorem \ref{t2}} It follows from Lemmas \ref{up} and \ref{lower} that, for all $0\leq \hat{r}\leq \frac{r}{1+r},\ 0<r<+\infty$, $$\dim_{\rm H}R_\beta(\hat{r},r)=\frac{r-(1+r)\hat{r}}{(1+r)(r-\hat{r})}.$$ Combing Lemmas \ref{le}, \ref{count} and \ref{le1}, we complete our proof. $\hfill\Box$

\section{Proof of Theorem \ref{t1}}
When $\hat{r}=0$, note that $$R_\beta(0,0)\subseteq\{x \in [0,1):\hat{r}_\beta(x)=0\}.$$  Theorem \ref{t2} yields that the set $\{x \in [0,1):\hat{r}_\beta(x)=0\}$ is of full Lebesgue measure. When $\hat{r}>1$, we have $$\hat{r}>\frac{r}{1+r}\geq0$$ for all $0\leq r\leq +\infty$. It follows from Theorem \ref{t2} that the set  $\{x \in [0,1):\hat{r}_\beta(x)\geq \hat{r}\}$ is countable. When $0<\hat{r}\leq 1$, applying the same process as Section 3.3.1, we deduce that, for all $\theta>0$,
\begin{equation}\label{max}
\begin{aligned}
\dim_{\rm H}\{x \in [0,1):\hat{r}_\beta(x)\geq \hat{r},\ r\leq r_\beta(x)\leq r+\theta\}&\leq\frac{r+\theta-(1+r+\theta)\hat{r}}{(1+r)(r+\theta-\hat{r})}\\& \leq \frac{r-(1+r)\hat{r}}{(1+r)(r-\hat{r})} +\frac{\theta(1-\hat{r})}{(1+r)(r-\hat{r})} \\& \leq \frac{r-(1+r)\hat{r}}{(1+r)(r-\hat{r})} +\frac{\theta(1-\hat{r})^2}{\hat{r}^2}.
\end{aligned}
\end{equation} Now considering the final part of (\ref{max}) as a function of $r$, we can find that $$\frac{r-(1+r)\hat{r}}{(1+r)(r-\hat{r})}+\frac{\theta(1-\hat{r})^2}{\hat{r}^2}\leq \frac{(1-\hat{r})^2}{(1+\hat{r})^2}+\frac{\theta(1-\hat{r})^2}{\hat{r}^2}$$ for all $r\geq0$.  Note that $$\{x \in [0,1):\hat{r}_\beta(x)\geq \hat{r}\}\subseteq \bigcup_{i=0}^{+\infty} \bigcup_{j=1}^n\left\{x \in [0,1):\hat{r}_\beta(x)\geq \hat{r},\ i+\frac{j-1}{n}\leq r_\beta(x)\leq i+\frac{j}{n} \right\}.$$ By the $\sigma$-stability of Hausdorff dimension (see \cite{FE}), we have $$\dim_{\rm H}\{x \in [0,1):\hat{r}_\beta(x)\geq \hat{r}\}\leq \left(\frac{1-\hat{r}}{1+\hat{r}}\right)^2+\frac{1}{n}\frac{(1-\hat{r})^2}{\hat{r}^2}.$$ Letting $n\rightarrow +\infty$, we therefore conclude that  $$\dim_{\rm H}\{x \in [0,1):\hat{r}_\beta(x)\geq \hat{r}\}\leq \left(\frac{1-\hat{r}}{1+\hat{r}}\right)^2.$$

Finally, we use the maximization method of Bugeaud and Liao \cite{BL} for the estimation of the lower bound of $\dim_{\rm{H}} \{x \in [0,1):\hat{r}_\beta(x)= \hat{r}\}$.  In fact, fix $0< \hat{r}\leq 1$, the function $r \mapsto \dim_{\rm H}R_\beta(\hat{r},r)$ is continuous and reaches its maximum at the unique point $r=\frac{2\hat{r}}{1-\hat{r}}$ . By calculation, we conclude that the maximum is exactly equal to $\left(\frac{1-\hat{r}}{1+\hat{r}}\right)^2$. Therefore, $$\dim_{\rm H}\{x \in [0,1):\hat{r}_\beta(x)= \hat{r}\}\geq \left(\frac{1-\hat{r}}{1+\hat{r}}\right)^2.$$

{\bf Acknowledgement}
The authors thank Lingmin Liao for his valuable suggestion.  This work was partially supported by NSFC 11771153, Guangzhou Natural Science Foundation 2018B0303110005.

\end{document}